\def\ess~inf{\mathop{\rm ess~inf}}
\numberwithin{equation}{section}
\newenvironment{key words}{\emph{\texttt{Keywords}}\mbox{  }}{ }
\newtheorem*{theorema}{Theorem A}
\newtheorem*{theoremb}{Theorem B}
\newtheorem*{theoremc}{Theorem C}
\newtheorem{theorem}{Theorem}[section]
\newtheorem{lemma}[theorem]{Lemma}
\renewenvironment{proof}{\noindent{\textbf{Proof.}}}{\hfill$\Box$}
\theoremstyle{remark}
\theoremstyle{plain}
\newcommand{\Rmnum}[1]{\expandafter\@slowromancap\romannumeral #1@}
\begin{document}

\fancyhf{}

\fancyhead[EC]{X. Zhu, W. Li}

\fancyhead[EL]{\thepage}

\fancyhead[OC]{Endpoint regularity of general Fourier integral operators}

\fancyhead[OR]{\thepage}

\renewcommand{\headrulewidth}{0.5pt}
\renewcommand{\thefootnote}{\fnsymbol {footnote}}
\title
{\textbf{Endpoint regularity of general Fourier integral operators} \thanks{This work was supported by the National Key Research and Development Program of China, grant number 2022YFA1005700
		and the National Natural Science Foundation of China (No. 12271435, No. 11871436).}}

\author{Xiangrong Zhu \\
	\small{School of Mathematical Sciences, Zhejiang Normal University,  Jinhua 321004, P.R. China}\\
	Wenjuan Li \thanks{Corresponding author's email address: liwj@nwpu.edu.cn }\\
	\small{School of Mathematics and Statistics, Northwestern Polytechnical University,
		Xi'an, 710129, China} }

\date{}
 \maketitle

 {\bf Abstract:}  Let $n\geq 1,0<\rho<1, \max\{\rho,1-\rho\}\leq \delta\leq 1$ and
 $$m_1=\rho-n+(n-1)\min\{\frac 12,\rho\}+\frac {1-\delta}{2}.$$
 If the amplitude $a$ belongs to the H\"{o}rmander class $S^{m_1}_{\rho,\delta}$
 and $\phi\in \Phi^{2}$ satisfies the strong non-degeneracy condition, then we prove that the following  Fourier integral operator $T_{\phi,a}$ defined by
 \begin{align*}
 	T_{\phi,a}f(x)=\int_{\mathbb{R}^{n}}e^{i\phi(x,\xi)}a(x,\xi)\widehat{f}(\xi)d\xi,
 \end{align*}
 is bounded from the local Hardy space $h^1(\mathbb{R}^n)$ to $L^1(\mathbb{R}^n)$. As a corollary, we can also obtain the corresponding $L^p(\mathbb{R}^n)$-boundedness when $1<p<2$.
 
 These theorems are rigorous improvements on the recent works of Staubach and his collaborators. When $0\leq \rho\leq 1,\delta\leq \max\{\rho,1-\rho\}$, by using some
 similar techniques in this note, we can get the corresponding theorems which coincide with the known results.

{\bf Keywords:} Fourier integral operator; H\"{o}rmander class; local Hardy space.

{\bf Mathematics Subject Classification}:  42B20; 35S30.

\section{Introduction and main results}

 A pseudo-differential operator (PDO for short) is given by
$$T_a f(x)=(2\pi)^{-n}\int_{\mathbb{R}^n}e^{ix\cdot\xi}a(x,\xi)\widehat{f}(\xi)d\xi,$$
where $\widehat{f}$ is the Fourier transform of $f$ and $a$ is the amplitude. We always omit the constant $(2\pi)^{-n}$ throughout this note.

In its basic form, a Fourier integral operator (FIO for short) is defined by
\begin{align*}
	T_{\phi,a}f(x)=\int_{\mathbb{R}^{n}}e^{i\phi(x,\xi)}a(x,\xi)\widehat{f}(\xi)d\xi,
\end{align*}
where $\phi$ is the phase. In this note, we always assume that $f$ belongs to the Schwartz class $S(\mathbb{R}^{n})$.

A FIO $T_{\phi,a}$ is simply a pseudo-differential operator if $\phi(x,\xi)=x\cdot \xi$. When $\phi(x,\xi)=x\cdot \xi+|\xi|$, $T_{\phi,a}$ is closely related to the wave equation and Fourier transform on the unit sphere in $\mathbb{R}^{n}$ (see \cite[p. 395]{S93}).

FIOs have been widely used in the theory of partial differential equations and micro-local analysis. For instance, the solution to an initial value problem for a hyperbolic equation with variable coefficients can be effectively approximated by an FIO of the initial value (see \cite[p. 425]{S93}). Therefore, the boundedness of related FIOs provides a priori estimate for the solution. A systematic study of
these operators was initiated by H\"{o}rmander \cite{H71AM}.

At first, we recall some simplest and most useful definitions on amplitudes and phases.

Let $\mathbb{N}$ be the set $\{0,1,2,\ldots\}$. A function $a$ belongs to the H\"{o}rmander class $S^{m}_{\rho,\delta}$ $(m\in \mathbb{R},0\leq\rho,\delta\leq1)$ if it satisfies
\begin{equation}\label{gsh1.1}
	\sup_{x,\xi\in\mathbb{R}^{n}}(1+|\xi|)^{-m+\rho N-\delta M}|\nabla^{N}_{\xi}\nabla^{M}_{x}a(x,\xi)|=A_{N,M}<+\infty
\end{equation}
for any $N,M\in \mathbb{N}$. Immediately, one have
$$S^{m_1}_{\rho,\delta}\subset S^{m_2}_{\rho,\delta},S^{m}_{\rho_2,\delta}\subset S^{m}_{\rho_1,\delta},S^{m}_{\rho,\delta_1}\subset S^{m}_{\rho,\delta_2},$$
if $m_1<m_2,\rho_1<\rho_2,\delta_1<\delta_2.$

A real-valued function $\phi$ belongs to the class $\Phi^{2}$ if $\phi$ is positively homogeneous of order 1 in the frequency variable $\xi$ and satisfies
\begin{equation}\label{gsh1.2}
	\sup_{(x,\xi)\in\mathbb{R}^{n}\times(\mathbb{R}^{n}\setminus\{0\})}|\xi|^{-1+N}|\nabla^{N}_{\xi}\nabla^{M}_{x}\phi(x,\xi)|=B_{N,M}<+\infty
\end{equation}
for all $N,M\in \mathbb{N}$ with $N+M\geq 2$.

A real-valued function $\phi\in C^{2}(\mathbb{R}^{n}\times(\mathbb{R}^{n}\setminus\{0\}))$ satisfies the strong non-degeneracy condition (SND for short), if there exists a constant $\lambda>0$ such that
\begin{equation}\label{gsh1.3}
	\textrm{det}\left(\frac{\partial^{2}\phi}{\partial x_{j}\partial \xi_{k}}(x,\xi)\right)\geq \lambda
\end{equation}
for all $(x,\xi)\in\mathbb{R}^{n}\times(\mathbb{R}^{n}\setminus\{0\})$.

For PDOs and FIOs, the most important problem is whether they are bounded on Lebesgue spaces and Hardy spaces.
This problem has been extensively studied and there are numerous results. Here we always assume that $a\in S^m_{\rho,\delta}$ and $\phi\in \Phi^{2}$ satisfies the SND condition (\ref{gsh1.3}).

For PDOs, when $a\in S^{m}_{\rho,\delta}$, then $T_a$ is bounded on $L^2$ if either $m\leq\frac{n}{2}\min\{0,\rho-\delta\}$ when $\delta<1$
(see H\"{o}rmander \cite{H71CPAM}, Hounie \cite{H86} and Calder\'{o}n-Vaillancourt \cite{CV71,CV72}) or $m<\frac{n}{2}(\rho-1)$ when $\delta=1$ (see Rodino \cite{R76} ).
The bound on $m$ is sharp. For endpoint estimates, one can see Fefferman \cite{F73}, \'{A}lvarez-Hounie \cite{AH90} and Guo-Zhu \cite{GZ22}.

For the local $L^{2}$ boundedness of FIO, it can be date back to Eskin \cite{E70} and H{\"o}rmander \cite{H71AM}. The transference of local to global regularity of FIOs can be found in
Ruzhansky-Sugimoto \cite{RS19}. Among numerous results on the global $L^{2}$ boundedness, we would like to mention that Dos Santos Ferreira-Staubach \cite{FS14} proved the global $L^{2}$ boundedness if either $m\leq\frac{n}{2}\min\{0,\rho-\delta\}$ when $\delta<1$ or $m<\frac{n}{2}(\rho-1)$ when $\delta=1$. This bound on $m$ is also sharp. For more results, see for instance \cite{Af78,B97,F78,K76,RS06}.

For endpoint estimates of FIOs, Seeger-Sogge-Stein \cite{SSS91} proved the local $H^{1}$-$L^{1}$ boundedness for $a\in S^{(1-n)/2}_{1,0}$ and got the $L^{p}$ boundedness for
$a\in S^{m}_{1,0}$ when $m=(1-n)|\frac{1}{p}-\frac{1}{2}|$ by the Fefferman-Stein interpolation. Tao \cite{T04} showed the weak type (1,1) for $a\in S^{(1-n)/2}_{1,0}$.
For the regularity of FIOs and its applications, there has been a great deal of progress and work recently, for example Cordero-Nicola-Rodino \cite{CNR09,CNR10}, Coriasco-Ruzhansky \cite{CR10, CR14},
Hassell-Portal-Rozendaal \cite{HPR20}, Israelsson-Rodr\'{i}guez L\'{o}pez-Staubach \cite{IRS21} etc. Among these results, the latest result is the following theorem which was proved by Castro-Israelsson-Staubach \cite{CIS21}.
\begin{theorema}
	(\cite[Theorem 1]{CIS21}) Let $n\geq 1,0\leq \rho\leq 1, 0\leq \delta<1$ and $a\in S^{m}_{\rho,\delta}$ with
	$$m\leq (\rho-n)|\frac 12-\frac 1p|+\frac{n}{2}\min\{0,\rho-\delta\}.$$
	If $\phi\in \Phi^{2}$ satisfies the SND condition (\ref{gsh1.3}), then the FIO $T_{\phi,a}$ is bounded on $L^p$ for $1<p<\infty$.
\end{theorema}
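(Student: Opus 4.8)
\medskip
\noindent\textbf{Proof idea.}
The case $p=2$ is exactly the $L^{2}$ bound quoted above, so assume $p\neq 2$; the plan is to reduce, by complex interpolation against this $L^{2}$ estimate, to the two endpoint bounds $T_{\phi,a}\colon h^{1}\to L^{1}$ (for $1<p<2$) and $T_{\phi,a}\colon L^{\infty}\to \mathrm{bmo}$ (for $2<p<\infty$) at the critical order $m=\frac{\rho-n}{2}+\frac n2\min\{0,\rho-\delta\}$. Since the second follows from the first argument run on a ball (splitting the input as $f\chi_{B^{*}}+f\chi_{(B^{*})^{c}}$), I only describe the $h^{1}\to L^{1}$ estimate, i.e.\ the uniform bound $\|T_{\phi,a}\mathfrak a\|_{L^{1}}\lesssim 1$ over $h^{1}$-atoms $\mathfrak a$. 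Fix a Littlewood--Paley partition $1=\psi_{0}(\xi)+\sum_{j\geq 1}\psi_{j}(\xi)$ with $\psi_{0}$ supported in $\{|\xi|\leq 2\}$ and $\psi_{j}$ in $\{|\xi|\sim 2^{j}\}$, and set $T_{j}=T_{\phi,a\psi_{j}}$. The piece $T_{0}$ has amplitude compactly supported in $\xi$; localizing $x$ to unit cubes and integrating by parts in $\xi$ off the set $\{|y-\nabla_{\xi}\phi(x,\xi)|\sim 1+|y|\}$ gives it a locally integrable, Calder\'on--Zygmund-type kernel, so it is bounded on every $L^{p}$, $1\leq p\leq\infty$, and in particular $h^{1}\to L^{1}$ (cf.\ \cite{FS14}). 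Rescaling the cited $L^{2}$ bound gives $\|T_{j}\|_{L^{2}\to L^{2}}\lesssim 2^{j(m-\frac n2\min\{0,\rho-\delta\})}=2^{-j\beta}$, $\beta=\frac{n-\rho}{2}\geq 0$.

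Let $\mathfrak a$ be supported in $B=B(x_{0},r)$. If $r\geq 1$ then $\|\mathfrak a\|_{L^{2}}\lesssim r^{-n/2}$, so $\sum_{j\geq 1}\|T_{j}\mathfrak a\|_{L^{2}}\lesssim r^{-n/2}$; on an enlargement $B^{*}$ of $B$ adapted to the canonical relation $x\mapsto\nabla_{\xi}\phi(x,\cdot)$ (of measure $\lesssim r^{n}$) Cauchy--Schwarz controls $\|T_{j}\mathfrak a\|_{L^{1}(B^{*})}$, and on $(B^{*})^{c}$ the rapid off-diagonal decay of the kernels of the $T_{j}$ does the rest. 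So assume $r=2^{-k}$, $k\geq 0$, whence $\mathfrak a$ has mean zero. For $1\leq 2^{j}\leq 2^{k}$ I would use the cancellation: $T_{j}\mathfrak a(x)=\int\!\big(K_{j}(x,y)-K_{j}(x,x_{0})\big)\mathfrak a(y)\,dy$, and estimating $\nabla_{y}K_{j}$ by non-stationary phase yields $\|T_{j}\mathfrak a\|_{L^{1}}\lesssim (2^{\,j-k})^{\theta}$ for some $\theta>0$, summable over $j\leq k$.

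The heart of the matter is the high frequencies $2^{j}>2^{k}$, treated by a second dyadic decomposition. For each $j$, partition $\mathbb{S}^{n-1}$ into $\sim 2^{j\sigma(n-1)}$ caps of radius $\sim 2^{-j\sigma}$ with $\sigma=1-\min\{\tfrac12,\rho\}$, take a homogeneous partition of unity $\{\chi_{j}^{\nu}\}$ with centers $\xi^{\nu}$, and set $a_{j}^{\nu}=a\psi_{j}\chi_{j}^{\nu}$. On $\mathrm{supp}\,a_{j}^{\nu}$ split off the linear part of the phase, $\phi(x,\xi)=\nabla_{\xi}\phi(x,\xi^{\nu})\cdot\xi+\Psi_{j}^{\nu}(x,\xi)$; the choice of $\sigma$ is forced by the requirement that, in the plate coordinates of the cap, the curvature factor $e^{i\Psi_{j}^{\nu}}$ has uniformly bounded derivatives, while the tangentially rescaled amplitude $2^{-jm}a_{j}^{\nu}$ has uniform $S^{0}$-bounds in the $n-1$ tangential variables and remains an $S^{m}_{\rho,0}$-symbol in the radial one. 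Then $T_{\phi,a_{j}^{\nu}}$ equals, up to the unimodular factor $e^{i\nabla_{\xi}\phi(x,\xi^{\nu})\cdot\xi}$ and a harmless translation, a pseudodifferential-type operator whose kernel, modulo rapidly decaying tails, is concentrated on a plate $P_{j}^{\nu}$ of dimensions $2^{-j}\times(2^{-j(1-\sigma)})^{n-1}$ pointing in the $\xi^{\nu}$-direction. Combining (i) the SND condition (\ref{gsh1.3}), which makes the $\xi^{\nu}$, hence the plate orientations, quantitatively $2^{-j\sigma}$-separated uniformly in the base point and bounds $|\bigcup_{\nu}P_{j}^{\nu,*}|$; (ii) a Cotlar--Stein almost-orthogonality estimate among the $\{T_{\phi,a_{j}^{\nu}}\}_{\nu}$, giving $\big\|\sum_{\nu}T_{\phi,a_{j}^{\nu}}\mathfrak a\big\|_{L^{2}}\lesssim 2^{-j\beta}\|\mathfrak a\|_{L^{2}}$; and (iii) Cauchy--Schwarz on $\bigcup_{\nu}P_{j}^{\nu,*}$ plus the off-plate decay, one obtains a bound on $\|T_{j}\mathfrak a\|_{L^{1}}$ with geometric decay in $j-k$, summable over $j>k$ precisely when $m\leq\frac{\rho-n}{2}+\frac n2\min\{0,\rho-\delta\}$.

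The main obstacle is exactly this last step. The crude triangle inequality over $\nu$ gives only $\sum_{\nu}2^{jm}\|\mathfrak a\|_{L^{1}}\sim 2^{j(m+\sigma(n-1))}$, whose exponent is \emph{zero} at the critical $m$ (e.g.\ $m=\frac{1-n}{2}$, $\sigma=\frac12$ when $\rho=1$, $\delta=0$); one must therefore extract genuine $j$-decay from $L^{2}$ almost-orthogonality together with the transversality of the plates --- the Seeger--Sogge--Stein mechanism --- and for general $(\rho,\delta)$ the Cotlar--Stein estimate has to be carried out inside the full $S^{m}_{\rho,\delta}$ calculus, pairing the $2^{j\delta}$ cost of each $x$-derivative against the $L^{2}$ input at order $\frac n2\min\{0,\rho-\delta\}$ rather than crudely freezing $x$. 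Fixing the correct cap radius $2^{-j\max\{1/2,\,1-\rho\}}$ and balancing the number of caps, the per-cap operator norm, and the plate geometry is where essentially all of the difficulty concentrates; the remaining ingredients --- the low-frequency piece, the large-radius atoms, the moment estimate, the interpolation, and the dual $L^{\infty}\to\mathrm{bmo}$ endpoint --- are standard.
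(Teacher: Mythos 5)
You should first note that the paper does not actually prove Theorem~A: it is quoted from \cite{CIS21}, and the only internal indication of its proof is the remark that the exponent splits as the sum of a gain $(\rho-n)|\tfrac12-\tfrac1p|$ coming from an $h^p$--$L^2$ bound for a fractional integral and a gain $\tfrac n2\min\{0,\rho-\delta\}$ coming from the $L^2$ theorem for FIOs. Your proposal instead runs a direct Seeger--Sogge--Stein style atomic estimate at the $h^1\to L^1$ endpoint and interpolates; this is the right family of ideas, and it is essentially the machinery the present paper deploys for its own Theorem~1.1 (Littlewood--Paley plus second dyadic decomposition, a Cotlar--Stein/Schur $L^2$ estimate per cap, Cauchy--Schwarz on an exceptional set, kernel decay off it, and a Fefferman--Stein analytic family for the interpolation). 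So the skeleton is sound and compatible with the paper's toolbox.

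There are, however, genuine gaps. First, the decisive quantitative step --- showing that $L^2$ almost-orthogonality over the caps together with the plate geometry produces summable decay in $j$ at the critical order --- is asserted rather than carried out; you yourself identify it as ``where essentially all of the difficulty concentrates,'' so as written nothing is established at the critical $m$. Second, several scales are wrong once $\rho<\tfrac12$. The cap radius must be $2^{-j\rho_0}$ with $\rho_0=\min\{\rho,\tfrac12\}$: an amplitude in $S^m_{\rho,\delta}$ gains only $2^{-j\rho}$ per $\xi$-derivative, so angular localization finer than $2^{-j\rho}$ buys nothing and only multiplies the number of caps; your choice $2^{-j\max\{1/2,\,1-\rho\}}$ agrees with this only when $\rho\geq\tfrac12$. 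Correspondingly, the dual plate has radial thickness $2^{-j\rho}$ and transverse thickness $2^{-j\rho_0}$ (this is exactly what the operator $L=1-2^{2j\rho}\partial^2_{\xi^\nu_j}-2^{2j\rho_0}\sum_k\partial^2_{\xi_k}$ and the set $R^\nu_{j,r}$ in the paper encode), not $2^{-j}\times(2^{-j(1-\sigma)})^{n-1}$; and the low/high split relative to an atom of radius $r$ must be taken at $2^{j\rho}r\sim1$, not $2^{j}r\sim1$. With your scales the bookkeeping at the critical order does not close in the form described. Finally, two smaller points: the $L^\infty\to\mathrm{bmo}$ endpoint concerns the adjoint $T^*$ (a type-II FIO) and is most cleanly obtained by duality from the $h^1\to L^1$ bound, as in the paper's Section~5, rather than by ``running the same argument on a ball''; and since the two endpoint bounds live at different symbol orders, the interpolation must go through an analytic family $T_z$ with $z$-dependent order, not plain complex interpolation of a single fixed operator.
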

When $2<p<\infty$, in \cite{SZ23} Shen-Zhu improved Theorem A to
$$m\leq (\rho-n)|\frac 12-\frac 1p|+\frac{n}{p}\min\{0,\rho-\delta\}.$$

Before the next theorem, we recall the definitions of Hardy spaces and local Hardy spaces.

Let $\Phi$ be a function in the Schwartz space $S(\mathbb{R}^n)$ satisfying $\int_{\mathbb{R}^n}\Phi(x)dx=1$.
Set $\Phi_t(x)=\frac{1}{t^n}\Phi(\frac{x}{t})$. Following Stein \cite[p. 91]{S93}, we can define the Hardy space $H^p(\mathbb{R}^n)(0<p<+\infty)$ as the space of all tempered distributions $f$ satisfying
$$\|f\|_{H^p}=\|\sup\limits_{t>0}|f\ast\Phi_t|\|_{L^p(\mathbb{R}^n)}<\infty.$$
The local Hardy space (see \cite {G79}) $h^p(\mathbb{R}^n)(0<p<+\infty)$ is defined as the space of all tempered distributions $f$ satisfying
$$\|f\|_{h^p}=\|\sup\limits_{0<t<1}|f\ast\Phi_t|\|_{L^p(\mathbb{R}^n)}<\infty.$$
It is well-known that $H^p=h^p=L^{p}$ for equivalent norms when $1<p<+\infty$ and $H^1\subset h^1\subset L^1$.

In \cite{IMS23}, Israelsson-Mattsson-Staubach extend Theorem A to $p\leq 1$.
\begin{theoremb}
	(\cite{IMS23}) Let $n\geq 1,0\leq \rho\leq 1, 0\leq \delta<1$ and $a\in S^{m}_{\rho,\delta}$ with
	$$m\leq (\rho-n)|\frac 12-\frac 1p|+\frac{n}{2}\min\{0,\rho-\delta\}.$$
	If $\phi\in \Phi^{2}$ satisfies the SND condition (\ref{gsh1.3}), then $T_{\phi,a}$ is bounded from the local Hardy space $h^p$ to $L^p$ when  $\frac {n}{n+1}<p\leq 1$.
	Furthermore, if $a$ has compact support in the spatial variable $x$, then $T_{\phi,a}$ is bounded from $h^p$ to $L^p$ for any $0<p\leq 1$.
\end{theoremb}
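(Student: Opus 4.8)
The plan is to use the Coifman--Weiss atomic method together with Littlewood--Paley and second dyadic decompositions, adapting the Seeger--Sogge--Stein scheme to general $\rho,\delta$. By the atomic characterization of $h^p$ and the continuity of $T_{\phi,a}$ on $\mathcal S(\mathbb R^n)$, it suffices to bound $\|T_{\phi,a}\mathfrak a\|_{L^p}$ uniformly over atoms $\mathfrak a$: such an $\mathfrak a$ is supported in a ball $B=B(x_0,r)$, has $\|\mathfrak a\|_\infty\le|B|^{-1/p}$, and (for $r<1$) vanishes to order $\lfloor n(1/p-1)\rfloor$, which is a single vanishing moment when $n/(n+1)<p\le1$. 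Translate $x_0$ to $0$. For a \emph{large} atom ($r\ge1$): since $(\rho-n)|1/2-1/p|\le0$ the hypothesis forces $m\le\tfrac n2\min\{0,\rho-\delta\}$, so $T_{\phi,a}$ is bounded on $L^2$ by the theorem of Dos Santos Ferreira--Staubach, and $\|\mathfrak a\|_2\le|B|^{1/2-1/p}\le1$; using $\phi\in\Phi^2$ and the fact that SND makes $x\mapsto\nabla_\xi\phi(x,\xi)$ globally bi-Lipschitz, a non-stationary phase argument shows $T_{\phi,a}\mathfrak a$ is, up to a rapidly decaying tail, concentrated on the canonical image of $B$, a set of measure $\lesssim r^n$, so Hölder gives $\|T_{\phi,a}\mathfrak a\|_{L^p}\lesssim1$. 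When $a$ is compactly supported in $x$, $T_{\phi,a}\mathfrak a$ lives in a fixed compact set and this $L^2$/Hölder argument handles \emph{every} atom for all $0<p\le1$ (at the cost of the extra vanishing moments), which settles that part of the theorem. It remains to treat one atom with $r<1$.

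Write $T_{\phi,a}=\sum_{j\ge0}T_j$ via a smooth dyadic partition $\{\psi_j\}$ in $\xi$. The low-frequency piece $T_0$ (with $\xi$-support in $|\xi|\lesssim1$) has a rapidly decaying kernel by non-stationary phase, hence is harmless. For $j\ge1$ split at $j_0:=\log_2(1/r)$. In the range $1\le j\le j_0$ we exploit the vanishing moment. On the annulus $|\xi|\sim2^j$ perform a second dyadic decomposition into angular sectors of opening $\theta_j$ and into radial shells of width $2^{j\rho}$, where $\theta_j$ is the finest angular scale compatible with the $S^m_{\rho,\delta}$-smoothness of $a$ in $\xi$ (so $\theta_j\sim2^{-j/2}$ when $\rho\ge1/2$ and $\theta_j\sim2^{-j(1-\rho)}$ when $\rho<1/2$), producing $\sim\theta_j^{-(n-1)}$ sectors and $\sim2^{j(1-\rho)}$ shells. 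The resulting pieces $T_j^{\nu,l}$ have kernels $K_j^{\nu,l}$ that, after repeated integration by parts in $\xi$ using $\phi\in\Phi^2$, the SND curvature information, and the symbol bounds on $a$, are concentrated on curved slabs adapted to the $(\nu,l)$-block — radial extent $\sim\max(r,2^{-j})$, tangential extent $\sim\max(r,2^{-j}\theta_j^{-1})$, centered on the canonical image of $B$ — with rapid off-slab decay. Subtracting $K_j^{\nu,l}(x,0)$ (via $\int\mathfrak a=0$) gains a factor $\sim r2^j$ from $|\nabla_y K_j^{\nu,l}|\lesssim2^j|K_j^{\nu,l}|$; Hölder on the slab together with Plancherel in $\xi$ bound $\|T_j^{\nu,l}\mathfrak a\|_{L^p}$, and summing over $l$ (in $\ell^2$, by radial frequency orthogonality), over $\nu$ (in $\ell^1$, or $\ell^2$ when $\rho<1/2$), and over $1\le j\le j_0$ (a geometric series, thanks to the $r2^j$ factor) is organized to yield the loss $(\rho-n)|1/2-1/p|$; the off-slab tails sum because one vanishing moment is exactly what the range $n/(n+1)<p\le1$ provides (and more moments handle smaller $p$ in the compactly supported case).

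In the complementary range $j>j_0$ the cancellation is essentially useless; instead one uses the concentration of each $T_j\mathfrak a$ near the canonical image of $B$ together with the single-block estimate $\|T_j\|_{L^2\to L^2}\lesssim2^{j(m+\frac n2\max\{0,\delta-\rho\})}$ (the FIO analogue of the $S^m_{\rho,\delta}$ pseudodifferential $L^2$ bound) and almost-orthogonality among the $T_j$; Hölder and $\|\mathfrak a\|_2\le r^{n/2-n/p}$ then give a bound for $\sum_{j>j_0}\|T_j\mathfrak a\|_{L^p}$ that converges under $m\le(\rho-n)|1/2-1/p|+\tfrac n2\min\{0,\rho-\delta\}$. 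Adding the three contributions — $T_0$, $1\le j\le j_0$, and $j>j_0$ — yields $\|T_{\phi,a}\mathfrak a\|_{L^p}\lesssim1$, which is the theorem.

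The main obstacle is the range $1\le j\le j_0$: one must establish the kernel concentration and off-slab decay for the sectorial/radial pieces $T_j^{\nu,l}$ sharply and simultaneously in $j$, in the number of pieces, and in the order of cancellation, while the whole slab geometry and the bookkeeping of the summations change according to whether $\rho\gtrless1/2$, since $a$ is only $2^{j\rho}$-smooth in $\xi$. The SND hypothesis enters exactly here, through the curvature of the canonical relation — playing the role that the curvature of the light cone plays in the classical case $\rho=1$, $\delta=0$. The restriction $\delta<1$ in Theorem B is what permits the clean $L^2$ inputs above; pushing to $\delta=1$ would force one to replace those $L^2$ estimates (one can no longer integrate by parts freely in the spatial variable $x$), which is precisely where the present paper improves on Theorem B.
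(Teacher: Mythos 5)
First, a point of orientation: the paper does not prove Theorem~B at all --- it is a quoted result, attributed to Israelsson--Mattsson--Staubach \cite{IMS23} and stated only to be assembled from \cite[Propositions 5.7 and 6.4]{IRS21} and \cite[Proposition 5.1]{IMS23}. So there is no in-paper proof to match your argument against; what can be judged is whether your sketch would stand on its own. Its skeleton is indeed the right one (Seeger--Sogge--Stein: atomic decomposition of $h^p$, Littlewood--Paley plus a second dyadic decomposition, cancellation for $2^j\lesssim 1/r$, an $L^2$/exceptional-set argument for $2^j\gtrsim 1/r$), which is also the strategy of the cited works and of the present paper's Theorem~1.1.

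However, as written the proposal has genuine gaps. The decisive estimates are named rather than established: the kernel concentration on ``curved slabs'' with rapid off-slab decay, the claim that the $\ell^2$/$\ell^1$ summation over shells and sectors ``is organized to yield the loss $(\rho-n)|1/2-1/p|$'', and the almost-orthogonality for $j>j_0$ are all asserted, and these are exactly the places where the proof either closes or does not. Moreover, your choice of angular aperture is wrong in the regime $\rho<1/2$: the scale compatible with the construction (and the one used in \cite{SSS91}, \cite{CIS21} and in Section~2 of this paper) is $\theta_j\sim 2^{-j\rho_0}$ with $\rho_0=\min\{\rho,1/2\}$, giving $J\sim 2^{j(n-1)\rho_0}$ sectors; your $\theta_j\sim 2^{-j(1-\rho)}$ produces $2^{j(n-1)(1-\rho)}$ sectors, and the resulting Cauchy--Schwarz loss $J^{1/2}$ is too large to be absorbed by the per-sector $L^2$ gain, so the final exponent would not come out to $(\rho-n)|1/2-1/p|$. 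There is also an internal inconsistency between your radial frequency shells of width $2^{j\rho}$ and the claimed spatial radial extent $\max(r,2^{-j})$ of the kernels (localizing to shells of width $2^{j\rho}$ spreads the kernel over radial extent $2^{-j\rho}$, not $2^{-j}$). Finally, the large-atom/global case for $0\le\rho$ small and $p$ near $n/(n+1)$ needs more than ``rapid off-set decay'': one must actually produce decay faster than $|x|^{-n/p}$ from integration by parts with a symbol that gains nothing per $\xi$-derivative when $\rho=0$, which requires using the strong negativity of $m$ in that regime and is not automatic.
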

Theorem B can be considered as a corollary of \cite[Proposition 5.7]{IRS21}, \cite[Proposition 6.4]{IRS21} and \cite[Proposition 5.1]{IMS23}.

As one can see in the proof, the part $(\rho-n)|\frac 12-\frac 1p|$ comes from the $h^p-L^2$ boundedness of the fractional integral operator and the part $\frac{n}{2}\min\{0,\rho-\delta\}$ comes from the $L^2$ boundedness of the Fourier integral operator. It is natural to expect that Theorem B is sharp when $p<2$. However, recently, Ye-Zhang-Zhu \cite{YZZ23} extended Theorem B to $\delta=1$, and  got an unexpected result which yields that Theorem B is not sharp at least when $\delta$ is close to 1.

\begin{theoremc}
	(\cite[Theorem 1.1]{YZZ23})
	Suppose that $0\leq\rho\leq 1,n\geq 2$ or $0\leq\rho<1,n=1$, $a\in S^{m}_{\rho,1}$ with
	$$m\leq \frac{\rho-n}{p}+(n-1)\min\{\rho,\frac 12\}$$
	and $\phi\in \Phi^{2}$ satisfies the SND condition (\ref{gsh1.3}). Then for any $\frac {n}{n+1}<p\leq 1$, there holds
	$$\|T_{\phi,a}f\|_{L^p}\leq C\|f\|_{h^p}.$$
	Furthermore, if $a$ has compact support in variable $x$, then for any $0<p\leq 1$, there holds
	$$\|T_{\phi,a}f\|_{L^p}\leq C\|f\|_{h^p}.$$
\end{theoremc}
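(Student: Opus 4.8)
The plan is to follow the Seeger--Sogge--Stein scheme --- atomic decomposition together with a second dyadic (angular) decomposition in frequency --- but to carry out \emph{every} kernel estimate by integration by parts in the frequency variable $\xi$ alone. This is the point on which Theorem~C improves over Theorem~B: in the route to Theorem~B the loss $\tfrac n2\min\{0,\rho-\delta\}$ enters through the $L^2$-boundedness of $T_{\phi,a}$, which for $\delta=1$ really is penalised by the $x$-roughness of the amplitude, whereas the geometric kernel estimates below never differentiate $a$ in $x$ and therefore see only $\rho$.

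First one reduces to $h^p$-atoms: for $0<p\le1$ it suffices to bound $\|T_{\phi,a}b\|_{L^p}$ uniformly over $(p,\infty)$-atoms $b$. ``Large'' atoms (supported in balls of radius $\gtrsim1$, with no moment conditions) are where the hypotheses enter: when $a$ has compact $x$-support the image $T_{\phi,a}b$ lies in a fixed bounded set and the bound is trivial, while in general $T_{\phi,a}b$ is concentrated near the (possibly unbounded) characteristic variety of $\phi$ --- a hypersurface, by $\phi\in\Phi^2$ and the bi-Lipschitz bound on $x\mapsto\nabla_\xi\phi(x,\xi)$ forced by SND --- and decays off it at a rate that is $L^p$-integrable exactly when $p>\tfrac n{n+1}$. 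In the remaining case $b$ is supported in $B=B(y_0,r)$ with $r<1$ and $\int y^{\gamma}b(y)\,dy=0$ for $|\gamma|\le N_p:=\lfloor n(\tfrac1p-1)\rfloor$. Decompose $a=\sum_{j\ge0}a_j$ with $a_j$ supported where $|\xi|\sim2^j$ (the term $j=0$ is harmless), and for each $j$ cover the sphere by $\sim\theta_j^{-(n-1)}$ caps of aperture $\theta_j:=2^{-j\max\{1/2,1-\rho\}}$, chosen so that on each resulting sector, after subtracting the linearisation $\nabla_\xi\phi(x,\xi^{\nu})\cdot\xi$ of the phase at the central frequency, the phase is non-oscillatory while the amplitude is essentially constant tangentially. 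Writing $T_{\phi,a}=\sum_{j,\nu}T_j^{\nu}$, integration by parts in $\xi$ --- once radially, where $\partial_r^2\phi\equiv0$ by homogeneity, and once tangentially --- shows that, up to rapidly decaying tails, $K_j^{\nu}(x,y)$ is concentrated on a plate of dimensions $2^{-j\rho}\times(2^{-j\min\{\rho,1/2\}})^{n-1}$ on which $|K_j^{\nu}|\lesssim 2^{jm}\,2^{jn}\theta_j^{\,n-1}$. Using the identity $\max\{\tfrac12,1-\rho\}+\min\{\rho,\tfrac12\}=1$ this gives $\|T_j^{\nu}\|_{L^1\to L^1}\lesssim 2^{j(m+1-\rho)}$, hence $\|T_j\|_{L^1\to L^1}\lesssim 2^{j[(n-1)\max\{1/2,1-\rho\}+m+1-\rho]}$, whose exponent is exactly $0$ at the critical value $m=\tfrac{\rho-n}{p}+(n-1)\min\{\rho,\tfrac12\}$ when $p=1$.

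To turn this borderline information into an $h^p\to L^p$ bound one splits $\sum_j$ at the scale $2^{-j}\sim r$. For the low frequencies $2^j\lesssim r^{-1}$ --- where the atom is smaller than the oscillation scale --- one Taylor-expands $K_j^{\nu}(x,y)$ in $y$ about $y_0$ to order $N_p$; the leading terms annihilate $b$ by the moment conditions, and the remainder carries an extra factor $(2^jr)^{N_p+1}$ over the plate bound, so the resulting series in $j$ is geometric and --- again thanks to $\max+\min=1$ --- sums to $O(1)$ exactly at the stated threshold (using also $\|b\|_{L^1}\le|B|^{1-1/p}$ and, for $p<1$, that $T_j b$ lives on a small union of plates). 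For the high frequencies $2^j\gtrsim r^{-1}$ cancellation at the radial scale is gone; here one wants to use that each $T_j^{\nu}b$ is supported near a single plate, combine a Cauchy--Schwarz bound on that plate with an $L^2$ estimate for $T_j^{\nu}$, and sum over $\nu$. This is exactly where $\delta=1$ resists: the usual $TT^{*}$/non-stationary-phase argument that decouples distinct sectors relies on integration by parts in $x$, which the $x$-roughness of the amplitude obstructs, so the decoupling must be extracted instead from the $\xi$-side geometry of the plates.

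The real obstacle is thus the high-frequency part, and above all the intermediate band $r^{-1}\lesssim 2^j\lesssim r^{-1/\rho}$ --- which is empty when $\rho=1$ but not when $\rho<1$, so it is genuinely new relative to the classical $S^{(1-n)/2}_{1,0}$ theory. There the atom already exceeds an individual plate, so plain cancellation fails, yet it is smaller than the cone scale, so the naive $L^2$--Cauchy--Schwarz estimate, fed with the $\delta=1$ operator norm of $T_j$, diverges at the endpoint. One expects to close this band by observing that there the plate is still larger than $B$ in the \emph{tangential} directions, so that the tangential moments of $b$ can be used for partial cancellation while an $L^2$-type bound is paid only in the remaining radial direction, and then by checking that the low- and high-frequency series in $j$ meet at exactly $m=\tfrac{\rho-n}{p}+(n-1)\min\{\rho,\tfrac12\}$ uniformly for $0\le\rho\le1$. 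The spatial spreading of the FIO resurfaces in this balancing and is precisely what forces either $p>\tfrac n{n+1}$ or the compact-support hypothesis on $a$.
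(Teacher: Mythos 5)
A point of orientation first: Theorem C is quoted in this paper from \cite{YZZ23} and is not proved here; the closest the paper comes is its own Theorem 1.1, whose proof specializes to Theorem C at $p=1$, $0<\rho<1$, $\delta=1$. Measured against that proof, your proposal sets up the right framework (atomic decomposition, Littlewood--Paley plus a $\rho$-adapted angular decomposition with dual plates $2^{-j\rho}\times(2^{-j\rho_0})^{n-1}$, $\rho_0=\min\{\rho,\tfrac12\}$, and a split of the $j$-sum at the atom scale), and it correctly locates the difficulty in the high/intermediate frequency range where $\delta=1$ defeats the global $L^2$ estimate. But exactly there the argument stops: ``one expects to close this band by observing \dots and then by checking \dots'' is a statement of hope, not a proof, and it is precisely the step requiring new input. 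Two concrete ingredients are missing. (i) An endpoint $L^2$ estimate for the \emph{individual sector operators}: one needs $\|T^{\nu}_{j}f\|_2\lesssim 2^{j(\rho-n)/2}\|f\|_2$ uniformly in $\nu$ (Lemmas 3.2--3.3 here), which is strictly better than what the global $\delta=1$ Calder\'on--Vaillancourt-type bound yields and is obtained by a continuous Cotlar--Stein/$TT^{*}$ argument localized to the sector $B^{\nu}_{j}$. Note that this argument \emph{does} integrate by parts in a spatial variable --- contrary to your guiding principle that every estimate avoids $x$-derivatives of $a$ --- but the cost $2^{j\delta}$ per derivative is absorbed because the frequency differences transverse to $\xi^{\nu}_{j}$ within one sector are $O(2^{j(1-\rho_0)})=O(2^{j\delta})$. (ii) A quantified exceptional set: to sum over $\nu$ and $j$ at the endpoint one needs an anisotropic set $R^{\nu}_{j,r}$ of measure $\lesssim r^{1+(n-1)\lambda_{\rho}}$ with $\lambda_{\rho}=\rho_0/\rho$, on which Cauchy--Schwarz against the sector $L^2$ bound closes, plus the pointwise plate decay off it. Your ``plate still larger than $B$ tangentially, pay $L^2$ only radially'' heuristic points in this direction but is never made quantitative, and without (i) it cannot be.

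There are also smaller inaccuracies worth flagging. Your cap count is inconsistent with your plate: aperture $\theta_j=2^{-j\max\{1/2,1-\rho\}}$ gives $2^{j(n-1)(1-\rho_0)}$ caps, whereas the decomposition matching the plate $2^{-j\rho}\times(2^{-j\rho_0})^{n-1}$ uses aperture $2^{-j\rho_0}$ and $\approx 2^{j(n-1)\rho_0}$ caps; your $L^1$ bookkeeping happens to balance at the critical $m$ either way, but the $\nu$-summation in the $L^2$ step does not. For the low frequencies the efficient route is not a Taylor expansion of the kernel but the Fourier-side bounds $|\widehat{b}(\xi)|\lesssim r|\xi|$ and $\|\widehat{b}\|_{L^2(|\xi|\sim 2^j)}\le\|b\|_2$, which is exactly what lets the troublesome band $r^{-1}\lesssim 2^{j}\lesssim r^{-1/\rho}$ be absorbed. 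Finally, the restriction $p>\tfrac{n}{n+1}$ for large atoms comes from the $|x|^{-(n+1)}$ spatial decay of the low-frequency kernel, not from concentration near a ``characteristic variety.''
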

One can easily check that
$$\frac{\rho-n}{p}+(n-1)\min\{\rho,\frac 12\}>(\rho-n)(\frac 1p-\frac12)+\frac{n(\rho-\delta)}{2}$$
when $p\leq 1$ and $1-\frac{n-1}{n}\min\{\rho,1-\rho\}<\delta\leq 1.$
Therefore, Theorem C implies that one can improve Theorem A and Theorem B strictly when $\delta$ is close to 1.

We denote
\begin{equation}\label{gsh1.4}
	m_1=\rho-n+(n-1)\min\{\rho,\frac 12\}+\frac{1-\delta}{2}=\min\{n(\rho-1),\rho-\frac{n+1}{2}\}+\frac{1-\delta}{2}.
\end{equation}

In this note, when $p=1,0<\rho<1,\max\{\rho,1-\rho\}\leq\delta\leq 1$, by using some new techniques, we prove the following theorem.
\begin{theorem}
	Let $0<\rho<1,\max\{\rho,1-\rho\}\leq\delta\leq 1,a\in S^{m_1}_{\rho,\delta}$.
	Suppose that $\phi\in \Phi^{2}$ satisfies the SND condition (\ref{gsh1.3}). Then we have
	$$\|T_{\phi,a}f\|_{1}\leq C\|f\|_{h^1}.$$
	The constants here depend only on $n,\rho,\delta,\lambda$ and finitely many semi-norms $A_{N,M}, B_{N,M}$.
\end{theorem}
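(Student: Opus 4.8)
The plan is to prove the $h^1 \to L^1$ bound by the standard atomic decomposition, reducing matters to a uniform estimate on the image of a single atom, and then to split $T_{\phi,a}$ dyadically in frequency and treat the low-frequency piece and the high-frequency pieces separately. Write $a = \sum_{j\geq 0} a_j$ where $a_j(x,\xi) = a(x,\xi)\psi_j(\xi)$ is supported on $|\xi| \sim 2^j$ (with $j=0$ the ball $|\xi|\lesssim 1$), and correspondingly $T_{\phi,a} = \sum_j T_j$. The operator $T_0$ has a compactly supported (in $\xi$) symbol and is essentially a nice convolution-type operator; together with the $j$-sum over the "non-stationary" or far region it will be bounded on $L^1$ directly, so the whole problem concentrates on the behavior of $T_j$ near the wave front, uniformly in $j$.

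For an $h^1$-atom $\mathfrak{a}$ supported in a ball $B=B(x_0,r)$ with $\|\mathfrak a\|_\infty \leq r^{-n}$ and (when $r<1$) vanishing moment $\int \mathfrak a = 0$, I would estimate $\|T_{\phi,a}\mathfrak a\|_{L^1(\mathbb R^n)}$ by splitting $\mathbb R^n$ into $2B^*$ (a fixed dilate of the support of the relevant piece, taking the phase's spatial behavior into account) and its complement. On $2B^*$ one uses Cauchy–Schwarz and the $L^2$-boundedness coming from Theorem A / Dos Santos Ferreira–Staubach with the $S^{\frac n2\min\{0,\rho-\delta\}}_{\rho,\delta}$ threshold, combined with the $h^1\to L^2$ mapping of the "fractional integration" part encoded by the extra $\rho - n + (n-1)\min\{\tfrac12,\rho\} = \min\{n(\rho-1),\rho-\tfrac{n+1}{2}\}$ of decay; the $\tfrac{1-\delta}{2}$ surplus in $m_1$ is exactly what compensates the $\delta$-loss in derivatives of the symbol when $\delta>\rho$. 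Away from $2B^*$ one integrates by parts in $\xi$ using the SND condition and the $\Phi^2$ bounds on $\phi$ to gain arbitrary decay in $2^j |x - \nabla_\xi\phi(x,\xi_0)|$ (here the vanishing moment is used to extract an extra factor of $r\,2^j$ on the diagonal block $2^j r \lesssim 1$, while for $2^j r \gtrsim 1$ one argues without it), and sums in $j$.

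The key new point, and the reason this improves Theorems A and B in the range $\delta$ close to $1$, is the handling of the $x$-derivatives: rather than absorbing all $M$ spatial derivatives of $a_j$ into a crude $2^{j\delta M}$ loss, one should exploit that after the change of variables $y = \nabla_\xi\phi(x,\xi)$ (legitimate by SND) the phase becomes essentially linear and the relevant kernel estimates only cost $2^{j(1-\delta)/2}$ per application in the directions that matter — this is where the $(n-1)\min\{\rho,\tfrac12\}$ "wave-front gain" of Theorem C and the $\tfrac{1-\delta}{2}$ term combine. Concretely I would introduce the second dyadic (Córdoba-type) decomposition of the $|\xi|\sim 2^j$ shell into $\sim 2^{j(1-\rho)(n-1)/2}$ curved plates of dimensions $2^{j\rho}\times(2^{j(1+\rho)/2})^{n-1}$, estimate the contribution of each plate by stationary phase / $L^2$ methods, and sum; the bookkeeping of how many plates meet a given ball and how the atom's normalization interacts with plate geometry is the main obstacle. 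I expect the hardest technical step to be this uniform-in-$j$ summation over plates in the regime $\delta > \rho$, where the $x$-dependence of $a$ is as bad as allowed and one must be careful that the extra $\tfrac{1-\delta}{2}$ decay is genuinely enough — and, at the endpoint $\delta=1$, matches Theorem C. The $L^p$ corollary for $1<p<2$ then follows by interpolating the resulting $h^1\to L^1$ bound against the trivial $L^2\to L^2$ bound (valid since $m_1 \le \tfrac n2\min\{0,\rho-\delta\}$), using $h^1\subset L^1$ and $h^p = L^p$.
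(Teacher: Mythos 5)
Your skeleton --- atomic decomposition, Littlewood--Paley plus a second dyadic decomposition into plates, an exceptional set treated by Cauchy--Schwarz and $L^2$ bounds, integration by parts away from it, and the vanishing moment of the atom for the low-frequency blocks --- matches the paper's. But the one ingredient that actually produces the improvement over Theorem B is missing, and you explicitly defer it. On the near region you propose to use ``the $L^2$-boundedness coming from Theorem A / Dos Santos Ferreira--Staubach with the $S^{\frac n2\min\{0,\rho-\delta\}}_{\rho,\delta}$ threshold'' combined with fractional integration; that route yields exactly the Theorem B threshold $m\leq \frac{\rho-n}{2}+\frac{n}{2}\min\{0,\rho-\delta\}$, which (see the Remark after Theorem 1.2) is strictly smaller than $m_1$ whenever $\max\{\rho,1-\rho\}<\delta\leq 1$, so the global $L^2$ theorem cannot close the argument. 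What is needed, and what the paper supplies as Lemmas 3.2--3.3, is a \emph{local} $L^2$ estimate for each single-plate operator $T^{\nu}_{j}$: after straightening the phase via the SND map $x\mapsto\nabla_{\xi}\phi(x,\xi^{\nu}_{j})$, a continuous Cotlar--Stein/Schur argument in the frequency variable over $B^{\nu}_{j}$ gives $\|T^{\nu}_{j}\|_{L^2\to L^2}\lesssim 2^{j(m_1+(n-1)(\frac 12-\rho_0)+\frac{\delta-\rho}{2})}=2^{j(\rho-n)/2}$. The mechanism is that the almost-orthogonality scale in $\xi$ is $2^{j\delta}$, and the hypothesis $\delta\geq\max\{\rho,1-\rho\}\geq 1-\rho_0$ guarantees that the transverse extent $2^{j(1-\rho_0)}$ of the plate fits inside one such unit, so only the long direction of the plate contributes to the Schur integral. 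Your heuristic that the spatial derivatives ``only cost $2^{j(1-\delta)/2}$ per application in the directions that matter'' gestures at this but is not a proof, and you yourself flag that you have not verified that the $\frac{1-\delta}{2}$ surplus suffices; without this lemma the theorem is not proved.

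Two further inaccuracies. The second dyadic decomposition is mis-stated: the correct one uses angular caps of aperture $2^{-j\rho_0}$ with $\rho_0=\min\{\rho,\frac 12\}$, hence about $2^{j(n-1)\rho_0}$ plates of dimensions $2^{j}\times(2^{j(1-\rho_0)})^{n-1}$; your count $2^{j(1-\rho)(n-1)/2}$ and dimensions $2^{j\rho}\times(2^{j(1+\rho)/2})^{n-1}$ degenerate incorrectly (for instance to a single plate as $\rho\to 1$), and the plate count enters the final summation, so this is not cosmetic. The diagonal/off-diagonal split must also be taken at $2^{j\rho}r\lessgtr 1$ rather than $2^{j}r\lessgtr 1$: with the Fourier-side bounds $|\widehat{b}(\xi)|\lesssim r|\xi|$ and $|\nabla^{\beta}\widehat{b}|\lesssim r^{|\beta|}$, the small-$j$ terms are of size $2^{j\rho}r$ and the large-$j$ geometric sums run over $2^{j\rho}r\geq 1$, so convergence hinges on cutting at $2^{j\rho}r\sim 1$ and on $\rho>0$. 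Finally, the exceptional set should be the direction-dependent anisotropic slab $R^{\nu}_{j,r}$ of dimensions $r\times(r^{\lambda_{\rho}})^{n-1}$ in the coordinates $\nabla_{\xi}\phi(\cdot,\xi^{\nu}_{j})$, not a fixed isotropic dilate $2B^{*}$; the measure $r^{1+(n-1)\lambda_{\rho}}$ of these slabs is exactly what balances the plate-by-plate $L^2$ bound in the regime $2^{j\rho}r\geq 1$.
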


By using Fefferman-Stein interpolation theorem, we get the following theorem.
\begin{theorem}
	Let $\max\{\rho,1-\rho\}\leq\delta<1,a\in S^{m_p}_{\rho,\delta}$ where
	$$m_p=(m_1-\frac{n(\rho-\delta)}{2})(\frac 2p-1)+\frac{n(\rho-\delta)}{2}.$$
	Suppose that $\phi\in \Phi^{2}$ satisfies the SND condition (\ref{gsh1.3}). Then we have
	$$\|T_{\phi,a}f\|_{p}\leq C\|f\|_{p}$$
	when $1<p<2$. The constants here depend only on $n,\rho,\delta,\lambda,p$ and finitely many semi-norms $A_{N,M}, B_{N,M}$.
\end{theorem}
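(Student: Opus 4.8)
The plan is to deduce Theorem 1.2 from Theorem 1.1 together with the known global $L^2$-boundedness of Fourier integral operators, by an application of Stein's interpolation theorem for analytic families of operators in the Fefferman--Stein form that interpolates between $H^1$ and $L^2$.

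First I would isolate the two endpoint estimates. Since $\max\{\rho,1-\rho\}\le\delta<1$ forces $\rho\le\delta$, one has $\tfrac n2\min\{0,\rho-\delta\}=\tfrac{n(\rho-\delta)}{2}$, so by the result of Dos Santos Ferreira--Staubach \cite{FS14} every $b\in S^{n(\rho-\delta)/2}_{\rho,\delta}$ gives an operator $T_{\phi,b}$ bounded on $L^2$, with norm controlled by finitely many of the semi-norms $A_{N,M}$ of $b$. At the other end, Theorem 1.1 asserts that every $b\in S^{m_1}_{\rho,\delta}$ yields $T_{\phi,b}\colon h^1\to L^1$ boundedly; since $H^1\hookrightarrow h^1$ with $\|f\|_{h^1}\le C\|f\|_{H^1}$, the same operator maps $H^1$ to $L^1$, again with norm controlled by finitely many semi-norms of $b$.

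Next I would build the analytic family. Put $\mu(z)=(1-z)\,m_1+z\,\tfrac{n(\rho-\delta)}{2}$, an affine function with $\mu(0)=m_1$, $\mu(1)=\tfrac{n(\rho-\delta)}{2}$, and $\mu(\theta)=m_p$ for $\theta=2-\tfrac2p\in(0,1)$ (a direct computation, using the definition of $m_p$). For $z$ in the closed strip $0\le\operatorname{Re}z\le1$ set
$$a_z(x,\xi)=a(x,\xi)\,(1+|\xi|^2)^{(\mu(z)-m_p)/2},$$
so that $a_\theta=a$. Since $(1+|\xi|^2)^{w}$ is entire in $w$ and each $\xi$-derivative produces a factor $(1+|\xi|^2)^{-1}$ times a polynomial in $w$, one checks that $a_z\in S^{\operatorname{Re}\mu(z)}_{\rho,\delta}$ with semi-norms $\le C(1+|\operatorname{Im}z|)^{K}$ for a fixed $K$; moreover, for $f,g\in S(\mathbb{R}^n)$ the pairing $z\mapsto\langle T_{\phi,a_z}f,g\rangle$ is well defined (the $\xi$-integral converges absolutely because $\widehat f$ is rapidly decreasing), is analytic in the open strip and continuous up to its closure, and has at most polynomial growth there. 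Hence the two estimates above apply on the boundary lines: $\|T_{\phi,a_{it}}f\|_{L^1}\le C(1+|t|)^{K}\|f\|_{H^1}$ and $\|T_{\phi,a_{1+it}}f\|_{L^2}\le C(1+|t|)^{K}\|f\|_{L^2}$. The Fefferman--Stein analytic interpolation theorem (interpolating $H^1$ and $L^2$ produces $L^p$ with $\tfrac1p=1-\tfrac\theta2$) applied at $z=\theta$ then gives $\|T_{\phi,a}f\|_{L^p}\le C\|f\|_{L^p}$ for every $f\in S(\mathbb{R}^n)$, and density of $S(\mathbb{R}^n)$ in $L^p$ for $1<p<2$ finishes the proof.

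The main obstacle, though essentially technical, will be the verification of the admissibility hypotheses of Stein's theorem: that $z\mapsto\langle T_{\phi,a_z}f,g\rangle$ is genuinely analytic on the strip and that the operator norms on the two boundary lines grow sub-exponentially in $|\operatorname{Im}z|$. Both reduce to the observation that the quantitative constants in Theorem 1.1 and in the $L^2$-boundedness theorem depend only on finitely many symbol semi-norms, each of which is polynomial in $|\operatorname{Im}z|$ for the family $a_z$; the remainder is bookkeeping with the affine order function $\mu$ and the identity $\mu(\theta)=m_p$.
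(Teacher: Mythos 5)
Your proposal is correct and follows essentially the same route as the paper: an analytic family obtained by twisting $a$ with a complex power of $(1+|\xi|^2)^{1/2}$ interpolating the orders $m_1$ and $\tfrac{n(\rho-\delta)}{2}$, with Theorem 1.1 at one boundary line and the Dos Santos Ferreira--Staubach global $L^2$ theorem at the other, combined via Fefferman--Stein analytic interpolation. The only (cosmetic) difference is that the paper passes to the adjoint family and interpolates between $L^2\to L^2$ and $L^\infty\to \mathrm{BMO}$ before dualizing back, while you interpolate directly between $H^1\to L^1$ and $L^2\to L^2$; these are dual formulations of the same theorem, and your admissibility and growth checks (polynomial growth of the semi-norms in $|\operatorname{Im}z|$, in place of the paper's Gaussian damping factor) are adequate.
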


\textbf{Remark.} It is easy to check that
$$m_1>\frac{\rho-n}{2}+\frac{n(\rho-\delta)}{2}$$
when $\delta>\max\{\rho,1-\rho\}$. So, Theorem 1.1 and Theorem 1.2 are strict improvements of Theorem A and Theorem B when $\max\{\rho,1-\rho\}<\delta\leq 1,1\leq p<2$.
On the other hand, Theorem 1.1 coincides with Theorem B when $0<\rho<1,\delta=\max\{\rho,1-\rho\}$ and Theorem C when $0<\rho<1,\delta=1$. Therefore, Theorem 1.1 is a reasonable and ideal result.

To get Theorem 1.1, we use some new techniques. At first, we establish a completely new local $L^2$ bounded estimate. Secondly, for an $h^1$ atom supported in $B(0,r)$ when $r<1$, we directly utilize the properties of its Fourier transform. This enables us to overcome a key difficult point arising from the part $\sum\limits_{2^{j\rho}r\leq 1}$. Thirdly, we introduce a new local "exceptional set" instead of the one in Seeger-Sogge-Stein \cite{SSS91}.  Through the properties of this "exceptional set", we are able to simplify and enhance some crucial computations.

Throughout this note, $A\lesssim B$ means that $A\leq CB$ for some constant $C$. The notation $A\approx B$ means that $A\lesssim B$ and $B\lesssim A$. Without explanation,
the implicit constants given in this note may vary from occasion to occasion but depend only on $n,\rho,p,\lambda,\delta$ and finitely many semi-norms of $A_{N,M},B_{N,M}$.

\section{Some preliminaries and lemmas}

Let $B_r(x_0)$ be the ball in $\mathbb{R}^n$ centered at $x_0$ with a radius of $r$.

Firstly, we introduce the theory of atom decomposition of $h^1$.  A function $b$ is called a $L^2$-atom for $h^1(\mathbb{R}^n)$ if\\
(1) $b$ is supported in $B_r(x_0)$ for some $x_0\in \mathbb{R}^n$;\\
(2) $\|b\|_2\leq r^{-\frac n2}$;\\
(3) when $r<1$, $\int_{\mathbb{R}^n}b(x)dx=0$. When $r\geq 1$, $b$ only needs to satisfy (1) and (2).

It is also well-known that a distribution $f\in h^1(\mathbb{R}^n)$ has an atomic decomposition $f=\sum\limits_{j}\lambda_jb_j,$ with $\sum\limits_{j}|\lambda_j|<+\infty$.
Moreover, there holds
$$\|f\|_{h^1}\approx\inf\{\sum_{j}|\lambda_j|:f=\sum\limits_{j}\lambda_jb_j,\{b_j\}_{j\in \mathbb{N}}\textrm{ is a collection of } L^2\textrm{-atoms for }h^1 \}.$$

Secondly, thanks to the SND condition (\ref{gsh1.3}), for any $E\subset \mathbb{R}^n$, $\xi\in S^{n-1}$ and a real-value function $g$, there holds
\begin{align}
	\int_{\nabla_{\xi}\phi(x,\xi)\in E}\left|g\left(\nabla_{\xi}\phi(x,\xi)\right)\right|dx\lesssim \int_{E}|g(y)|dy.\label{gsh2.1}
\end{align}
This inequality will be used frequently throughout this red note.

Thirdly, we introduce the Littlewood-Paley dyadic decomposition and the second dyadic decomposition.

Take a nonnegative function $\Psi_0\in C^{\infty}_{c}(B_{2})$ such that $\Psi_0\equiv 1$ on $B_{1}$, and
set $\Psi(\xi)=\Psi_0(\xi)-\Psi_0(2\xi)$. It is easy to see that $\Psi$ is supported in $\{\xi\in \mathbb{R}^n: \frac 12<|\xi|<2\}$ and
$$\Psi_0(\xi)+\sum^{\infty}_{j=1}\Psi(2^{-j}\xi)=1, \forall \xi\in \mathbb{R}^n.$$
For the sake of convenience, we denote $\Psi_j(\xi)=\Psi(2^{-j}\xi)$ when $j>0$.

Put $\rho_0=\min\{\rho,\frac 12\}$. For every $j>0$, there are no more than $J=C2^{j(n-1)\rho_0}$ points $\xi^\nu_j\in \mathbb{S}^{n-1}$ ($\nu=1,2,\ldots, J$) such that
\begin{align*}
	|\xi^{\nu_1}_j-\xi^{\nu_2}_j|\geq 2^{-j\rho_0-2}\textrm{ if } \nu_1\neq \nu_2 \textrm{ and }
	\inf_{\nu}|\xi^{\nu}_j-\xi|\leq 2^{-j\rho_0}, \forall \xi\in \mathbb{S}^{n-1}.
\end{align*}

For $j,\nu>0$, set
\begin{align*}
	\Gamma^{\nu}_{j}=\{\xi: |\frac{\xi}{|\xi|}-\xi^{\nu}_{j}|\leq 2^{2-j\rho_0}\}\textrm{ and }B^{\nu}_{j}=\Gamma^{\nu}_{j}\cap\{\xi: 2^{j-1}<|\xi|<2^{j+1}\}.
\end{align*}
It is easy to see that
\begin{align*}
	\sum_{\nu=1}^{J}\chi_{\Gamma^{\nu}_{j}}(\xi)\leq C,\forall \xi \in S^{n-1},
\end{align*}
where $C$ depends only on $n$.

Then by using the same arguments as in \cite[p. 20-21]{CIS21}, we can construct a partition of unity $\{\psi^{\nu}_{j}\}_{\nu=1}^J$ associated with the family $\{\Gamma^{\nu}_{j}\}_{\nu=1}^J$ for any $j>0$.
Each $\psi^{\nu}_{j}$ is positively homogeneous of degree 0, supported in $\Gamma^{\nu}_{j}$, and satisfies that
\begin{align}
	\sum^{J}_{\nu=1}\psi^{\nu}_{j}(\xi)\equiv1 \textrm{ if }\xi\neq 0,\left|\nabla^{k}_{\xi}\psi^{\nu}_{j}(\xi)\right|\leq C_{k}|\xi|^{-k}2^{jk\rho_0}, k\in\mathbb{N}, \label{gsh2.2}
\end{align}
where $C_k$ depends only on $k$.

Next, we usually use the following notations,
\begin{align*}
	&a_j(x,\xi)=a(x,\xi)\Psi_j(\xi),\hspace{0.3cm}T_{\phi,a_j}f(x)=\int_{\mathbb{R}^{n}}e^{i\phi(x,\xi)}a_j(x,\xi)\widehat{f}(\xi)d\xi,\hspace{0.3cm} j\geq 0;\\
	&T^{\nu}_{j}f(x)=\int_{\mathbb{R}^{n}}e^{i\phi(x,\xi)}a_j(x,\xi)\psi^{\nu}_{j}(\xi)\widehat{f}(\xi)d\xi,\hspace{0.3cm} j,\hspace{0.1cm}\nu>0.
\end{align*}
One can easily see that $T_{\phi,a_j}f,T^{\nu}_{j}f$ are well-defined for any $f\in L^{1}$.

Set
$$h^{\nu}_{j}(x,\xi)=\phi(x,\xi)-\xi\cdot \nabla_{\xi}\phi(x,\xi^{\nu}_{j}).$$
By using similar arguments in \cite[p. 407]{S93} or \cite{CIS21,DGZ23,FS14,SZ23}, we get the following lemma.
\begin{lemma}
	If $j>0$ and $\phi\in \Phi^{2}$, then for any $\xi\in B^{\nu}_{j}$ we have
	\begin{align*}
		&|\partial^{N}_{\xi^{\nu}_{j}}\nabla^{M}_{\xi}\nabla^{K}_{x}h^{\nu}_{j}|\lesssim 2^{-j(N\rho+M\rho_0)}, \textrm{if $N,M,K\geq 0$ with $N+M\geq 1$};\\
		&|\partial^{N}_{\xi^{\nu}_{j}}\nabla^{M}_{\xi}\psi^{\nu}_{j}|\lesssim 2^{-j(N+M(1-\rho_0))}\leq 2^{-j(N\rho+M\rho_0)}, \textrm{if $N,M\geq 0$}.
	\end{align*}
	The constants depend only on $n,\rho,N,M,K$ and finitely many semi-norms of $\phi\in \Phi^{2}$.
\end{lemma}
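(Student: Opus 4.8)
\emph{Plan of proof.} The statement is a routine but technically delicate consequence of the definitions of the classes $\Phi^2$ and $S^m_{\rho,\delta}$ together with the geometry of the sector $B^\nu_j$; the point is to track how each derivative in the three groups of variables $\xi^\nu_j$, $\xi$, and $x$ costs a power of $2^j$. The plan is to first reduce everything to estimates on $\phi$ and its derivatives restricted to $\xi \in B^\nu_j$, and then exploit two facts: (i) by homogeneity of degree $1$ in $\xi$, each $\xi$-derivative of $\phi$ beyond the first is bounded by $|\xi|^{1-N}$ times a constant from \eqref{gsh1.2}, so on $B^\nu_j$ (where $|\xi|\approx 2^j$) we gain $2^{-j(N-1)}$ for $N\ge 2$ derivatives; (ii) the curvature of the sphere forces $\xi$ and $\xi^\nu_j$ to satisfy $|\xi/|\xi| - \xi^\nu_j| \lesssim 2^{-j\rho_0}$, which is what converts a ``missing derivative'' into a gain of $2^{-j\rho_0}$ rather than merely $O(1)$.

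\emph{Key steps.} First I would write $h^\nu_j(x,\xi) = \phi(x,\xi) - \xi\cdot\nabla_\xi\phi(x,\xi^\nu_j)$ and observe that the subtracted term is \emph{linear} in $\xi$ and, by Euler's relation, equals the first-order Taylor polynomial (in the $\xi$-variable, about the direction $\xi^\nu_j$, evaluated at $|\xi|$ times that direction) of $\phi$. Hence $\nabla^M_\xi h^\nu_j$ for $M\ge 2$ is just $\nabla^M_\xi \phi$, which by \eqref{gsh1.2} is $O(|\xi|^{1-M}) = O(2^{-j(M-1)}) \le O(2^{-jM\rho_0})$ since $\rho_0\le 1/2<1$; and for $M=1$ one uses the mean value theorem in the angular variable together with \eqref{gsh1.2} applied to $\nabla^2_\xi\phi$ over the segment joining $\xi$ to $|\xi|\xi^\nu_j$, whose length is $\lesssim |\xi|\,2^{-j\rho_0} \approx 2^{j(1-\rho_0)}$, giving $|\nabla_\xi h^\nu_j| \lesssim 2^{j(1-\rho_0)}\cdot 2^{-j} = 2^{-j\rho_0}$. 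For $M=0$, $N\ge 1$, one differentiates the defining formula in $\xi^\nu_j$: $\partial_{\xi^\nu_j} h^\nu_j = -\xi\cdot\nabla_\xi\partial_{\xi^\nu_j}\phi(x,\xi^\nu_j)$, which is controlled by $|\xi|\cdot|\xi^\nu_j|^{-1}$ times a semi-norm, i.e.\ $O(2^j)$, but the first $\xi^\nu_j$-derivative is paired with the constraint that makes the \emph{difference} small, so one must instead expand $h^\nu_j$ to one more order: writing $h^\nu_j(x,\xi)$ as the second-order Taylor remainder forces $\partial^N_{\xi^\nu_j}$ with $N\ge 1$ to land on a quantity already of size $2^{-j\rho}\cdot(\text{angular displacement})$, and each further $\partial_{\xi^\nu_j}$ costs $|\xi^\nu_j|^{-\rho}\cdot|\xi| \approx 2^{j(1-\rho)}$ wait — more carefully, the homogeneity bookkeeping must be redone so that the bound reads $2^{-jN\rho}$. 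The cleanest route is to Taylor-expand $\phi(x, |\xi|\omega)$ in $\omega$ about $\omega = \xi^\nu_j$: $h^\nu_j = \sum_{|\alpha|\ge 2} \frac{|\xi|^{|\alpha|}}{\alpha!}(\omega-\xi^\nu_j)^\alpha \partial^\alpha_\omega \phi|_{\omega=\xi^\nu_j} \cdot(\text{scaling})$ — each factor $(\omega - \xi^\nu_j)$ is $O(2^{-j\rho_0})$ on $B^\nu_j$ and each $\partial^\alpha_\omega\phi$ is $O(|\xi|^{1-|\alpha|}) = O(2^{j(1-|\alpha|)})$, so a term of order $|\alpha|=\ell\ge 2$ contributes $2^{j\ell}\cdot 2^{-j\ell\rho_0}\cdot 2^{j(1-\ell)} = 2^{j(1-\ell\rho_0 + \ell - \ell)} $ hmm, $2^{j}\cdot 2^{-j\ell\rho_0}$; taking $\ell=2$ gives $2^{j(1-2\rho_0)}$. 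I will sort out the exact powers in the write-up, the structure being: differentiate term by term, $\nabla^M_\xi$ kills angular factors and lowers the $|\xi|$-power, $\partial^N_{\xi^\nu_j}$ removes $(\omega-\xi^\nu_j)$ factors (each removal a gain of $2^{j\rho_0}$ relative to an un-differentiated term but the un-differentiated term was smaller), $\nabla^K_x$ is harmless by \eqref{gsh1.2}, and one checks the net exponent is $\le -j(N\rho + M\rho_0)$ using $\rho_0 \le \rho$ and $\rho_0\le 1/2$. The second bound, on $\partial^N_{\xi^\nu_j}\nabla^M_\xi\psi^\nu_j$, follows directly from \eqref{gsh2.2} together with the fact that $\partial_{\xi^\nu_j}$ acting on $\psi^\nu_j$ — which depends on $\xi^\nu_j$ only through the homogeneous-degree-$0$ angular profile at scale $2^{-j\rho_0}$ — costs $2^{j\rho_0}|\xi|^{-0}$, no better argument needed, and $N + M(1-\rho_0)\ge N\rho + M\rho_0$ since $1-\rho_0\ge\rho_0$ and $1\ge\rho$.

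\emph{Main obstacle.} The genuinely delicate part is the bookkeeping for mixed derivatives $\partial^N_{\xi^\nu_j}\nabla^M_\xi\nabla^K_x h^\nu_j$ with $N\ge 1$: one has to be certain that differentiating in the \emph{center} direction $\xi^\nu_j$ does not destroy the $2^{-j\rho_0}$ gains coming from the angular smallness of $B^\nu_j$, and that the worst case $N+M=1$ is handled by a genuine Taylor-remainder (mean-value) argument rather than by crude triangle-inequality bounds, which would only give $O(2^j)$ and be useless. Everything else is a mechanical chase through \eqref{gsh1.2} and the chain rule; this is why the excerpt simply cites \cite[p.~407]{S93} and the analogous lemmas in \cite{CIS21,DGZ23,FS14,SZ23}, and I would do the same after spelling out the $M=1$ and $N=1$ base cases in detail.
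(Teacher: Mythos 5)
There is a genuine gap, and it sits exactly where you located the ``main obstacle'': the derivatives with $N\ge 1$. First, a notational point that matters: in this paper $\partial_{\xi^{\nu}_{j}}$ is the directional derivative $\xi^{\nu}_{j}\cdot\nabla_{\xi}$ acting on the frequency variable $\xi$ (after rotating $\xi^{\nu}_{j}$ to $(1,0,\dots,0)$ it is just $\partial_{\xi_1}$; this reading is forced by the way $L=1-2^{2j\rho}\partial^{2}_{\xi^{\nu}_{j}}-2^{2j\rho_0}\sum_{k}\partial^{2}_{\xi_k}$ is integrated by parts in Section 4). Your line ``$\partial_{\xi^{\nu}_{j}}h^{\nu}_{j}=-\xi\cdot\nabla_{\xi}\partial_{\xi^{\nu}_{j}}\phi(x,\xi^{\nu}_{j})$'' treats it as differentiation in the parameter $\xi^{\nu}_{j}$, a different (and here useless) operation. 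With the correct reading, $\partial_{\xi^{\nu}_{j}}h^{\nu}_{j}=\xi^{\nu}_{j}\cdot\bigl(\nabla_{\xi}\phi(x,\xi)-\nabla_{\xi}\phi(x,\xi^{\nu}_{j})\bigr)$, and the mean value theorem alone gives only $O(2^{-j\rho_0})$, not the claimed $O(2^{-j\rho})$. The missing ingredient, which never appears explicitly in your sketch, is Euler's relation for the Hessian: $\nabla^{2}_{\xi}\phi(x,\eta)\,\eta=0$ by homogeneity of degree one, so in $\xi^{\nu}_{j}\cdot\nabla^{2}_{\xi}\phi(x,\eta)v$ one may replace $\xi^{\nu}_{j}$ by $\xi^{\nu}_{j}-\eta/|\eta|=O(2^{-j\rho_0})$ and gain a second angular factor, yielding $O(2^{-2j\rho_0})\le O(2^{-j\rho})$ because $2\rho_0\ge\rho$ in both cases $\rho\le\tfrac12$ and $\rho>\tfrac12$. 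The same device, applied once for each $\partial_{\xi^{\nu}_{j}}$, is what produces $2^{-jN\rho}$ for all mixed derivatives; the crude homogeneity bound $O(2^{j(1-N-M)})$, which you correctly use for pure $\nabla^{M}_{\xi}$ with $M\ge 2$, fails whenever $N\ge 1$ and $\rho>\tfrac12$ (e.g.\ $N=M=1$ would need $1\ge\rho+\rho_0=\rho+\tfrac12$). Your Taylor-expansion-in-$\omega$ computation is the right framework, but you stop before the exponents are verified, as you yourself flag.

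The second estimate has the same defect. From \eqref{gsh2.2} alone one gets only $|\nabla_{\xi}\psi^{\nu}_{j}|\lesssim 2^{-j(1-\rho_0)}$, and your phrase ``costs $2^{j\rho_0}|\xi|^{-0}$'' would even give an exponent of the wrong sign; the claimed $|\partial^{N}_{\xi^{\nu}_{j}}\psi^{\nu}_{j}|\lesssim 2^{-jN}$ again requires homogeneity of degree zero: $\xi\cdot\nabla_{\xi}\psi^{\nu}_{j}=0$, hence $\xi^{\nu}_{j}\cdot\nabla_{\xi}\psi^{\nu}_{j}=(\xi^{\nu}_{j}-\xi/|\xi|)\cdot\nabla_{\xi}\psi^{\nu}_{j}=O(2^{-j\rho_0})\cdot O(2^{-j(1-\rho_0)})=O(2^{-j})$. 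For what it is worth, the paper does not write the computation out either: it reduces to $K=0$ and $\xi^{\nu}_{j}=(1,0,\dots,0)$ by rotation invariance and the observation that $\partial^{\alpha}_{x}\phi\in\Phi^{2}$, and then cites \cite[Lemma 3.1]{DGZ23}. So deferring to a reference is acceptable here, but the portions you chose to argue in detail are precisely the ones where your argument, as written, does not close.
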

It is easy to see that $\partial^{\alpha}_x\phi\in \Phi^{2}$ for any $\alpha\in\mathbb{N}^n$.
So by the rotation invariance, it is sufficient for us to prove this lemma only for $K=0$ and $\xi^{\nu}_{j}=(1,0,\cdots,0)$.
Then one can see the details in the proof of \cite[Lemma 3.1]{DGZ23}.

As a direct corollary, we give the following lemma.
\begin{lemma}
	If $N,M,K\geq 0,j>0$, $a\in S^{m}_{\rho,\delta}$ and $\phi\in \Phi^{2}$, then for any $\xi\in B^{\nu}_{j}$ there holds
	$$\left|\partial^{N}_{\xi^{\nu}_{j}}\nabla^{M}_{\xi}\nabla^{K}_{x}(e^{ih^{\nu}_{j}}a_j\psi^{\nu}_{j})\right|\leq C2^{j(m-N\rho-M\rho_0+K\delta)},$$
	where $C$ depends only on $n,m,\rho,N,M,K$ and finitely many semi-norms of $a\in S^{m}_{\rho,\delta}$ and $\phi\in \Phi^{2}$.
\end{lemma}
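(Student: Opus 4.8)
The plan is to deduce Lemma 2.2 from Lemma 2.1 by writing the symbol as the triple product $e^{ih^\nu_j}\cdot a_j\cdot\psi^\nu_j$, distributing the operator $\partial^N_{\xi^\nu_j}\nabla^M_\xi\nabla^K_x$ over the three factors by the Leibniz rule, and estimating the resulting pieces one by one. For $a_j=a\Psi_j$ one uses only that $a\in S^m_{\rho,\delta}$, that $1+|\xi|\approx 2^j$ on $\mathrm{supp}\,\Psi_j$ (recall $j>0$), and that $|\nabla^k_\xi\Psi_j|\lesssim 2^{-jk}$; since $\rho\le 1$ and $\rho\ge\rho_0$ this gives
\[
\bigl|\partial^{N_2}_{\xi^\nu_j}\nabla^{M_2}_\xi\nabla^{K_2}_x a_j\bigr|\lesssim 2^{j(m-\rho(N_2+M_2)+\delta K_2)}\le 2^{j(m-N_2\rho-M_2\rho_0+\delta K_2)}.
\]
For the angular cutoff, the required bound $|\partial^{N_3}_{\xi^\nu_j}\nabla^{M_3}_\xi\psi^\nu_j|\lesssim 2^{-j(N_3\rho+M_3\rho_0)}$ is exactly the second line of Lemma 2.1, and $\psi^\nu_j$ carries no $x$-derivative.

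The remaining ingredient is a bound for the derivatives of $e^{ih^\nu_j}$. By the chain rule, $\partial^{N_1}_{\xi^\nu_j}\nabla^{M_1}_\xi\nabla^{K_1}_x e^{ih^\nu_j}$ equals $e^{ih^\nu_j}$ times a finite sum of products of derivatives $\partial^{a}_{\xi^\nu_j}\nabla^{b}_\xi\nabla^{c}_x h^\nu_j$, where the $a$'s, $b$'s and $c$'s sum to $N_1$, $M_1$ and $K_1$. Each factor with $a+b\ge 1$ is controlled by the first line of Lemma 2.1, by $2^{-j(a\rho+b\rho_0)}$, \emph{uniformly in the number $c$ of accompanying $x$-derivatives}. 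The factors with $a=b=0$ (hence $c\ge 1$) are the pure $x$-derivatives $\nabla^{c}_x h^\nu_j$, which lie outside the scope of Lemma 2.1; for these I would use the homogeneity structure of $h^\nu_j$. Since $\nabla^{c}_x\phi$ is positively homogeneous of degree $1$ in $\xi$, Euler's identity gives $\nabla^{c}_x\phi(x,\xi)=\sum_l\xi_l\,\partial_{\xi_l}\nabla^{c}_x\phi(x,\xi)$, so that
\[
\nabla^{c}_x h^\nu_j(x,\xi)=\sum_l\xi_l\Bigl(\partial_{\xi_l}\nabla^{c}_x\phi(x,\xi)-\partial_{\xi_l}\nabla^{c}_x\phi(x,\xi^\nu_j)\Bigr).
\]
On $B^\nu_j$ the bracket is the difference of a degree-$0$ homogeneous function evaluated at two points of angular separation $\lesssim 2^{-j\rho_0}$, hence is $\lesssim 2^{-j\rho_0}$ by the $\Phi^2$ estimates, while $|\xi_l|\approx 2^j$; therefore $|\nabla^{c}_x h^\nu_j|\lesssim 2^{j(1-\rho_0)}\le 2^{j\delta}$, the last step using $1-\rho_0\le\delta$, which follows from the standing hypothesis $\max\{\rho,1-\rho\}\le\delta$. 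As at most $K_1$ such pure-$x$ factors occur and each is $\lesssim 2^{j\delta}$ by the above, we obtain
\[
\bigl|\partial^{N_1}_{\xi^\nu_j}\nabla^{M_1}_\xi\nabla^{K_1}_x e^{ih^\nu_j}\bigr|\lesssim 2^{-j(N_1\rho+M_1\rho_0)}\,2^{jK_1\delta}.
\]

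Finally, I would multiply the three estimates and sum over the finitely many Leibniz terms. Because $\psi^\nu_j$ contributes no $x$-derivative ($K_3=0$) and $N_1+N_2+N_3=N$, $M_1+M_2+M_3=M$, $K_1+K_2=K$, the exponents add up precisely to $m-N\rho-M\rho_0+K\delta$, which is the assertion. All of this is routine bookkeeping once Lemma 2.1 is at hand; the one place that genuinely needs something more is the estimate for the pure $x$-derivatives of $h^\nu_j$, where one must exploit the cancellation produced by subtracting $\xi\cdot\nabla_\xi\phi(x,\xi^\nu_j)$ (equivalently, Euler's identity) together with the inequality $1-\rho_0\le\delta$ — this is also the only point at which the size of $\delta$ enters. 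I expect this to be the main, though ultimately minor, obstacle.
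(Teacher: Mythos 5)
Your proposal is correct and follows essentially the same route as the paper: Leibniz over the three factors, symbol estimates for $a_j$, Lemma 2.1 for $\psi^{\nu}_{j}$, and a Fa\`a di Bruno expansion for $e^{ih^{\nu}_{j}}$. The one place you diverge is also the one place you improve on the paper's write-up: the paper bounds every factor $\partial^{k_i}_{\xi^{\nu}_{j}}\nabla^{l_i}_{\xi}\nabla^{s_i}_{x}h^{\nu}_{j}$ by $2^{-j(k_i\rho+l_i\rho_0)}$, which for the pure $x$-derivative factors ($k_i=l_i=0$, $s_i\geq 1$) amounts to applying Lemma 2.1 outside its stated range $N+M\geq 1$ and claiming $|\nabla^{s_i}_{x}h^{\nu}_{j}|\lesssim 1$ — false in general when $\rho_0<\frac12$, since the true size is $2^{j(1-2\rho_0)}$. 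Your Euler-identity argument gives $|\nabla^{c}_{x}h^{\nu}_{j}|\lesssim 2^{j(1-\rho_0)}\leq 2^{j\delta}$ and correctly charges these factors to the $2^{jK\delta}$ budget in the conclusion, using $1-\rho_0\leq\delta$; this is exactly the repair the paper's argument needs (and it is the only point where the hypothesis $\max\{\rho,1-\rho\}\leq\delta$ from the surrounding theorems enters, so it should be recorded as a hypothesis of the lemma).
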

\begin{proof}
	By Lemma 2.1 and the product rule, for any $j>0,k,l,s\geq 0$ with $k+l+s>0$, we can get that
	\begin{align*}
		|\partial^{k}_{\xi^{\nu}_{j}}\nabla^{l}_{\xi}\nabla^{s}_{x}e^{ih^{\nu}_{j}}|
		\lesssim &\sum^{k+l+s}_{t=1}\sum_{\begin{subarray}{c}k_1+\cdots+k_t=k,l_1+\cdots+l_t=l,s_1+\cdots+s_t=s\\k_1+l_1+s_1,\ldots,k_t+l_t+s_t>0\end{subarray}}
		|\partial^{k_1}_{\xi^{\nu}_{j}}\nabla^{l_1}_{\xi}\nabla^{s_1}_{x}h^{\nu}_{j}|\cdots |\partial^{k_t}_{\xi^{\nu}_{j}}\nabla^{l_t}_{\xi}\nabla^{s_t}_{x}h^{\nu}_{j}|\nonumber\\
		\lesssim &\sum^{k+l+s}_{t=1}\sum_{\begin{subarray}{c}k_1+\cdots+k_t=k,l_1+\cdots+l_t=l,s_1+\cdots+s_t=s\\k_1+l_1+s_1,\ldots,k_t+l_t+s_t>0\end{subarray}}
		2^{-j(k_1\rho+l_1\rho_0)}\cdots 2^{-j(k_t\rho+l_t\rho_0)}\nonumber\\
		\lesssim &2^{-j(k\rho+l\rho_0)}.
	\end{align*}
	This inequality is certainly true for $k=l=s=0$.
	
	Similarly, since $a\in S^{m}_{\rho,\delta}$ and $\rho_0\leq \rho$, for any $j>0,k,l,s\geq 0$, by Lemma 2.1 we get that
	\begin{align*}
		|\partial^{k}_{\xi^{\nu}_{j}}\nabla^{l}_{\xi}\nabla^{s}_{x}(a_j\psi^{\nu}_{j})|
		\lesssim &\sum_{k_1+k_2=k}\sum_{l_1+l_2=l}
		|\nabla^{k_1+l_1}_{\xi}\nabla^{s}_{x}a_j| |\partial^{k_2}_{\xi^{\nu}_{j}}\nabla^{l_2}_{\xi}\psi^{\nu}_{j}|\nonumber\\
		\lesssim &\sum_{k_1+k_2=k}\sum_{l_1+l_2=l}2^{j(m-(k_1+l_1)\rho+s\delta)}2^{-j(k_2\rho+l_2\rho_0)}\lesssim 2^{j(m-k\rho-l\rho_0+s\delta)}.
	\end{align*}
	Therefore,  we show that
	\begin{align*}
		|\partial^{N}_{\xi^{\nu}_{j}}\nabla^{M}_{\xi}\nabla^{K}_{x}(e^{ih^{\nu}_{j}}a_j\psi^{\nu}_{j})|
		\lesssim & \sum_{N_1+N_2=N}\sum_{M_1+M_2=M}\sum_{K_1+K_2=K}|\partial^{N_1}_{\xi^{\nu}_{j}}\nabla^{M_1}_{\xi}\nabla^{K_1}_{x}e^{ih^{\nu}_{j}}||\partial^{N_2}_{\xi^{\nu}_{j}}\nabla^{M_2}_{\xi}\nabla^{K_2}_{x}(a_j\psi^{\nu}_{j})|\\
		\lesssim &\sum_{N_1+N_2=N}\sum_{M_1+M_2=M}\sum_{K_1+K_2=K}2^{-j(N_1\rho+M_1\rho_0)}2^{j(m-N_2\rho-M_2\rho_0+K_2\delta)}\\
		\lesssim &2^{j(m-N\rho-M\rho_0+K\delta)}.
	\end{align*}
	This finishes the proof.
\end{proof}

At last, we introduce a local "exceptional set" and obtain some basic properties.

Set $\lambda_{\rho}=1$ when $0<\rho\leq \frac 12$ and $\lambda_{\rho}=\frac{1}{2\rho}$ when $\frac 12<\rho\leq 1$. It is easy to see that $\rho_0=\lambda_{\rho}\rho$. For $r>0$, we set
$$R^{\nu}_{j,r}=\{x\in\mathbb{R}^n: |\xi^{\nu}_{j}\cdot(\nabla_{\xi}\phi)(x,\xi^{\nu}_{j})|\leq 3r \textrm{ and } |(\nabla_{\xi}\phi)(x,\xi^{\nu}_{j})|\leq 3r^{\lambda_{\rho}}\}.$$
Due to (\ref{gsh2.1}), it is easy to check that
$$|R^{\nu}_{j,r}|\lesssim |\{z\in\mathbb{R}^n:|\xi^{\nu}_{j}\cdot z|\leq 3r \textrm{ and } |z|\leq 3r^{\lambda_{\rho}}\}|\lesssim r^{1+(n-1)\lambda_{\rho}}.$$

When $j>0$, for any $x\notin R^{\nu}_{j,r},|y|<r$, we claim that
\begin{align}
	&1+2^{j\rho}|\xi^{\nu}_{j}\cdot(\nabla_{\xi}\phi)(x,\xi^{\nu}_{j})|+2^{j\rho_0}|(\nabla_{\xi}\phi)(x,\xi^{\nu}_{j})|\nonumber\\
	\lesssim &(1+2^{j\rho}|\xi^{\nu}_{j}\cdot((\nabla_{\xi}\phi)(x,\xi^{\nu}_{j})-y)|+2^{j\rho_0}|(\nabla_{\xi}\phi)(x,\xi^{\nu}_{j})-y|)^{\frac{1}{\lambda_{\rho}}}.\label{gsh2.5}
\end{align}

It is easy to see that $1+2^{j\rho}r+2^{j\rho_0}r^{\lambda_{\rho}}\leq 2(1+2^{j\rho}r)$ as $\lambda_{\rho}\leq 1$ and $\rho_0=\lambda_{\rho}\rho$. So, when $|y|<r$, there holds
\begin{align*}
	&1+2^{j\rho}|\xi^{\nu}_{j}\cdot(\nabla_{\xi}\phi)(x,\xi^{\nu}_{j})|+2^{j\rho_0}|(\nabla_{\xi}\phi)(x,\xi^{\nu}_{j})|\\
	\leq &1+2^{j\rho}|\xi^{\nu}_{j}\cdot((\nabla_{\xi}\phi)(x,\xi^{\nu}_{j})-y)|+2^{j\rho_0}|(\nabla_{\xi}\phi)(x,\xi^{\nu}_{j})-y|
	+2^{j\rho}|\xi^{\nu}_{j}\cdot y|+2^{j\rho_0}|y|\\
	\leq &1+2^{j\rho}|\xi^{\nu}_{j}\cdot((\nabla_{\xi}\phi)(x,\xi^{\nu}_{j})-y)|+2^{j\rho_0}|(\nabla_{\xi}\phi)(x,\xi^{\nu}_{j})-y|+2(1+2^{j\rho}r).
\end{align*}
Therefore, to prove (\ref{gsh2.5}), it is sufficient  for us to show that
$$2^{j\rho}r\lesssim (1+2^{j\rho}|\xi^{\nu}_{j}\cdot((\nabla_{\xi}\phi)(x,\xi^{\nu}_{j})-y)|+2^{j\rho_0}|(\nabla_{\xi}\phi)(x,\xi^{\nu}_{j})-y|)^{\frac{1}{\lambda_{\rho}}}.$$
For any $x\notin R^{\nu}_{j,r}$, there must be
$$|\xi^{\nu}_{j}\cdot(\nabla_{\xi}\phi)(x,\xi^{\nu}_{j})|>3r\textrm{ or }|(\nabla_{\xi}\phi)(x,\xi^{\nu}_{j})|>3r^{\lambda_{\rho}}.$$
If $|\xi^{\nu}_{j}\cdot(\nabla_{\xi}\phi)(x,\xi^{\nu}_{j})|>3r$, due to the fact $\lambda_{\rho}\leq 1$, we have
$$(1+2^{j\rho}|\xi^{\nu}_{j}\cdot((\nabla_{\xi}\phi)(x,\xi^{\nu}_{j})-y)|+2^{j\rho_0}|(\nabla_{\xi}\phi)(x,\xi^{\nu}_{j})-y|)^{\frac{1}{\lambda_{\rho}}}\geq (1+2^{j\rho}r)^{\frac{1}{\lambda_{\rho}}}\geq 2^{j\rho}r.$$
Otherwise, if $|(\nabla_{\xi}\phi)(x,\xi^{\nu}_{j})|>3r^{\lambda_{\rho}}$, as $\lambda_{\rho}\leq 1$ and $\rho_0=\lambda_{\rho}\rho$, we get that
\begin{align*}
	(1+2^{j\rho}|\xi^{\nu}_{j}\cdot((\nabla_{\xi}\phi)(x,\xi^{\nu}_{j})-y)|+2^{j\rho_0}|(\nabla_{\xi}\phi)(x,\xi^{\nu}_{j})-y|)^{\frac{1}{\lambda_{\rho}}}\geq (1+2^{j\rho_0}r^{\lambda_{\rho}})^{\frac{1}{\lambda_{\rho}}}\geq
	2^{j\rho}r.
\end{align*}
This finishes the proof of (\ref{gsh2.5}).

\section{A local $L^2$ bounded estimate}

 At first we introduce a continuous version of the Cotlar-Stein lemma proved in Calder\'{o}n-Vaillancourt \cite{CV71}.
\begin{lemma}\label{al2}
	(\cite{CV71})
	Let $A_{\xi}$ be a $\xi$-weakly measurable and uniformly bounded family of operators on $L^{2},$ $\Vert A_{\xi}\Vert\leq M_{0}$ for all $\xi$ in a measure space $E$.
	If the inequality
	$$\Vert A_{\xi}A_{\eta}^{*}\Vert+\Vert A^{*}_{\xi}A_{\eta}\Vert\leq h^2(\xi,\eta)$$
	holds with a nonnegative function $h(\xi,\eta)$ which is the kernel of a bounded integral operator $H$ on $L^{2}$ with norm $M$, then the operator
	$$A=\int_{E} A_{\xi}d\xi$$
	is bounded on $L^{2}$ with norm $\Vert A\Vert\leq M.$
\end{lemma}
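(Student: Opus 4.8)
The plan is to carry out the classical Cotlar--Stein almost-orthogonality argument in its continuous form. First I would dispose of the measure-theoretic technicalities. Since $\Vert A_\xi\Vert\le M_0$ uniformly, for every $E'\subseteq E$ of finite measure the vector-valued integral $A_{E'}=\int_{E'}A_\xi\,d\xi$ converges in the strong operator topology and is bounded, and the restriction of $h$ to $E'\times E'$ generates an integral operator on $L^2(E')$ whose norm is still at most $M$; so, modulo a routine truncation and weak-limit argument, it suffices to prove $\Vert A_{E'}\Vert\le M$ with a constant independent of $E'$, i.e.\ we may assume $|E|<\infty$.

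Next I would use that $AA^{*}$ is a bounded, positive, self-adjoint operator, so $\Vert A\Vert^{2N}=\Vert AA^{*}\Vert^{N}=\Vert (AA^{*})^{N}\Vert$ for every integer $N\ge1$. Expanding the power,
$$(AA^{*})^{N}=\int_{E^{2N}}A_{\xi_1}A_{\xi_2}^{*}A_{\xi_3}A_{\xi_4}^{*}\cdots A_{\xi_{2N-1}}A_{\xi_{2N}}^{*}\,d\xi_1\cdots d\xi_{2N},$$
so that $\Vert (AA^{*})^{N}\Vert\le\int_{E^{2N}}\bigl\Vert A_{\xi_1}A_{\xi_2}^{*}\cdots A_{\xi_{2N}}^{*}\bigr\Vert\,d\xi_1\cdots d\xi_{2N}$. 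The heart of the matter is to bound the operator norm of the integrand in two different ways and take the geometric mean. Bracketing the $2N$ factors as $A_{\xi_1}\,(A_{\xi_2}^{*}A_{\xi_3})\,(A_{\xi_4}^{*}A_{\xi_5})\cdots(A_{\xi_{2N-2}}^{*}A_{\xi_{2N-1}})\,A_{\xi_{2N}}^{*}$ and using $\Vert A_{\xi_j}\Vert,\Vert A_{\xi_j}^{*}\Vert\le M_0$ together with $\Vert A_\xi^{*}A_\eta\Vert\le h^{2}(\xi,\eta)$ gives the bound $M_0^{2}\prod_{k=1}^{N-1}h^{2}(\xi_{2k},\xi_{2k+1})$; bracketing instead as $(A_{\xi_1}A_{\xi_2}^{*})(A_{\xi_3}A_{\xi_4}^{*})\cdots(A_{\xi_{2N-1}}A_{\xi_{2N}}^{*})$ and using $\Vert A_\xi A_\eta^{*}\Vert\le h^{2}(\xi,\eta)$ gives $\prod_{k=1}^{N}h^{2}(\xi_{2k-1},\xi_{2k})$. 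Since the norm of the integrand is at most the geometric mean of these two upper bounds, it is at most $M_0\,h(\xi_1,\xi_2)h(\xi_2,\xi_3)\cdots h(\xi_{2N-1},\xi_{2N})$, a single chain of $2N-1$ factors.

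Finally I would integrate out $\xi_{2N},\xi_{2N-1},\dots,\xi_1$ successively; each integration applies one copy of the integral operator $H$ with kernel $h$, so the resulting $2N$-fold integral equals $\langle H^{2N-1}\mathbf 1_E,\mathbf 1_E\rangle$, which by submultiplicativity of the operator norm together with the Cauchy--Schwarz inequality is at most $\Vert H\Vert^{2N-1}\Vert\mathbf 1_E\Vert_{L^2}^{2}=M^{2N-1}|E|$. Hence $\Vert A\Vert^{2N}\le M_0\,M^{2N-1}|E|$; taking the $2N$-th root and letting $N\to\infty$ gives $\Vert A\Vert\le M$.

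I expect the only genuine difficulty to be the soft-analysis bookkeeping rather than the algebra: making the $\xi$-weak measurability precise enough to legitimize the vector-valued integrals and the Fubini step in the expansion of $(AA^{*})^{N}$, and handling the case $|E|=\infty$ cleanly via the truncation indicated above. The combinatorial core — the two bracketings and the geometric-mean estimate — is exactly the Cotlar--Stein trick and goes through verbatim; the one point to watch is that the two ``endpoint'' factors contribute precisely the single factor $M_0$, which is harmless because it evaporates in the $N\to\infty$ limit.
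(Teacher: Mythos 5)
The paper does not prove this lemma at all---it is quoted verbatim from Calder\'{o}n--Vaillancourt \cite{CV71} and used as a black box---so there is no internal proof to compare against. Your argument is the standard (and correct) continuous Cotlar--Stein proof: the reduction to $|E|<\infty$, the identity $\Vert A\Vert^{2N}=\Vert(AA^{*})^{N}\Vert$, the geometric mean of the two bracketings yielding the chain $M_{0}\,h(\xi_{1},\xi_{2})\cdots h(\xi_{2N-1},\xi_{2N})$, and the bound $\langle H^{2N-1}\mathbf{1}_{E},\mathbf{1}_{E}\rangle\le M^{2N-1}|E|$ followed by $N\to\infty$ all go through exactly as you describe.
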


When $0\leq \rho\leq 1,\max\{1-\rho,\rho\}\leq \delta\leq 1$, we establish the following technical lemma which plays an important role in this article.
\begin{lemma}
	When $0\leq \rho\leq 1,\max\{1-\rho,\rho\}\leq \delta\leq 1, j>0$, suppose that $\tilde{a}$ is supported in $\mathbb{R}^n\times B^{\nu}_{j}$ and satisfies
	\begin{align*}
		\sup_{x,\xi\in\mathbb{R}^{n}}|\partial^{N_1}_{\xi^{\nu}_{j}}\nabla^{N_2}_{\xi}\nabla^{M}_{x}\tilde{a}(x,\xi)|\leq A_{N_1,N_2,M}2^{j(m+\rho N_1+\rho_0N_2-\delta M)}
	\end{align*}
	for any $N_1,N_2,M\in \mathbb{N}$ and some positive constants $A_{N_1,N_2,M}$. Then the pseudo-differential operator $T_{\tilde{a}}$ is bounded on $L^2$, i.e.
	\begin{align*}
		\|T_{\tilde{a}}f\|_2\leq C2^{j(m+(n-1)(\frac 12-\rho_0)+\frac {\delta-\rho}{2})}\|f\|_2,
	\end{align*}
	where $C$ depends only on $n,\rho,\delta,m$ and finitely many semi-norms $A_{N_1,N_2,M}$.
\end{lemma}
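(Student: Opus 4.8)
The plan is to deduce the estimate from the Calder\'on--Vaillancourt theorem --- whose proof is exactly the continuous Cotlar--Stein Lemma~\ref{al2} --- after conjugating $T_{\tilde a}$ by an anisotropic dilation adapted to the plate $B^{\nu}_{j}$. The point is that such a dilation turns the frequency growth allowed in the hypotheses into the uniform bounds of the class $S^{0}_{0,0}$, and the conditions $\max\{1-\rho,\rho\}\le\delta$ (equivalently $\rho_{0}\le\rho\le\delta$) are precisely what make this work and force the claimed exponent $m+(n-1)(\tfrac12-\rho_{0})+\tfrac{\delta-\rho}{2}$ to be no smaller than $m$.

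First I would use the rotational invariance (as in the reduction following Lemma 2.1) to assume $\xi^{\nu}_{j}=(1,0,\dots,0)$. For the finitely many $j$ with $2^{-j\rho_{0}}\gtrsim 1$ the plate $B^{\nu}_{j}$ is a full dyadic annulus and the bound is immediate from Calder\'on--Vaillancourt, so I may assume $j$ large; then $B^{\nu}_{j}\subset\{\xi:|\xi_{1}|\sim 2^{j},\ |\xi'|\lesssim 2^{j(1-\rho_{0})}\}$. Next I would introduce the change of variables $x=(2^{j\rho}y_{1},2^{j\rho_{0}}y')$, $\xi=(2^{-j\rho}u_{1},2^{-j\rho_{0}}u')$; since the Jacobians cancel, this yields $\|T_{\tilde a}\|_{L^{2}\to L^{2}}=\|T_{\tilde b}\|_{L^{2}\to L^{2}}$, where $T_{\tilde b}$ is the pseudo-differential operator with amplitude $\tilde b(y,u)=\tilde a(2^{j\rho}y_{1},2^{j\rho_{0}}y',2^{-j\rho}u_{1},2^{-j\rho_{0}}u')$. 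By the chain rule each $\partial_{u_{1}}$ contributes a factor $2^{-j\rho}$, each tangential derivative $\partial_{u'}$ a factor $2^{-j\rho_{0}}$, and each spatial derivative $\partial_{y}$ a factor at most $2^{j\rho}\le 2^{j\delta}$; hence, writing a general $u$-derivative as $\partial^{N_{1}}_{u_{1}}\nabla^{N_{2}}_{u'}$ and using the hypothesis,
\[
\bigl|\partial^{N_{1}}_{u_{1}}\nabla^{N_{2}}_{u'}\nabla^{M}_{y}\tilde b\bigr|
\ \lesssim\
2^{-j\rho N_{1}-j\rho_{0}N_{2}+j\delta M}\,\sup_{x,\xi}\bigl|\partial^{N_{1}}_{\xi^{\nu}_{j}}\nabla^{N_{2}}_{\xi}\nabla^{M}_{x}\tilde a\bigr|
\ \lesssim\ 2^{jm},
\]
with implicit constants depending only on $N_{1},N_{2},M$ and $A_{N_{1},N_{2},M}$. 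Thus $2^{-jm}\tilde b$ lies in $S^{0}_{0,0}$ with finitely many seminorms controlled by the $A_{N_{1},N_{2},M}$.

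Applying the Calder\'on--Vaillancourt theorem \cite{CV71,CV72} (i.e. Lemma~\ref{al2}) to $2^{-jm}\tilde b$ gives $\|T_{\tilde a}\|_{L^{2}\to L^{2}}=\|T_{\tilde b}\|_{L^{2}\to L^{2}}\lesssim 2^{jm}$, with constant depending only on $n$ and finitely many $A_{N_{1},N_{2},M}$; and since $\rho_{0}\le\tfrac12$ and $\rho\le\delta$ give $(n-1)(\tfrac12-\rho_{0})+\tfrac{\delta-\rho}{2}\ge 0$, this is in particular at most $C\,2^{j(m+(n-1)(1/2-\rho_{0})+(\delta-\rho)/2)}$, which is the assertion. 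The only points that need attention are the small-$j$ reduction and checking that the anisotropic dilation respects the splitting of the frequency variable into the distinguished direction $\xi^{\nu}_{j}$ and the tangential ones --- exactly where $\rho_{0},\rho\le\delta$ is used. I expect a genuine obstruction only in the companion situation where the $x$-derivatives of $\tilde a$ are allowed to \emph{grow} like $2^{j\delta M}$ (the behaviour, by Lemma 2.2, of the building blocks $e^{ih^{\nu}_{j}}a_{j}\psi^{\nu}_{j}$): then no dilation normalizes the symbol, and one must run Cotlar--Stein directly on a decomposition of $T_{\tilde a}$ into pieces $T_{k,\tau}$ localized to unit cubes in the dilated $y$-variable and to the $\sim 2^{j(n-1)\rho_{0}}$ sub-plates $\tau$ of $B^{\nu}_{j}$, the almost-orthogonality coming from non-stationary phase --- in $u$ for $T_{k,\tau}T_{k',\tau'}^{*}$ and in $y$ for $T_{k,\tau}^{*}T_{k',\tau'}$; there the loss $(n-1)(\tfrac12-\rho_{0})+\tfrac{\delta-\rho}{2}$ is genuine, coming from the number of tangential sub-plates and from the $2^{j(\delta-\rho)}$ uncertainty mismatch along $\xi^{\nu}_{j}$.
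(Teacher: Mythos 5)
Your main argument does not prove the lemma in the form the paper actually uses. The exponent in the hypothesis is printed as $m+\rho N_1+\rho_0N_2-\delta M$, but this is a sign slip: Lemma 2.2 and the invocation of Lemma 3.2 inside the proof of Lemma 3.3 (where $|\partial^{N_1}_{\xi^{\nu}_{j}}\nabla^{N_2}_{\xi}\nabla^{M}_{x}\tilde{a}^{\nu}_{j}|\lesssim 2^{j(m_1-\rho N_1-\rho_0N_2+\delta M)}$) make clear that the intended hypothesis is that frequency derivatives \emph{gain} $2^{-j\rho}$, resp.\ $2^{-j\rho_0}$, while spatial derivatives \emph{lose} $2^{j\delta}$. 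Under that hypothesis your anisotropic dilation fails: to normalize the $\xi$-derivatives the radial dilation factor must be at least $2^{-j\rho}$, while to normalize the $x$-derivatives it must be at most $2^{-j\delta}$, and these are compatible only when $\delta\leq\rho$ --- the opposite of the regime $\max\{\rho,1-\rho\}\leq\delta$ of the lemma. With your dilation the $y$-derivatives of $\tilde b$ grow like $2^{j(\rho+\delta)M}$, so $2^{-jm}\tilde b$ is not in $S^0_{0,0}$. The fact that your computation lands on the lossless bound $2^{jm}$ should itself have been a warning: the loss $(n-1)(\tfrac12-\rho_0)+\tfrac{\delta-\rho}{2}$ is exactly what produces $2^{j(\rho-n)/2}$ in Lemma 3.3 and hence the exponent $m_1$ of Theorem 1.1, and for $\delta>\rho$ a lossless estimate is impossible already for pseudo-differential operators (the sharp Calder\'on--Vaillancourt threshold is $\tfrac n2(\rho-\delta)$).

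Consequently the case you set aside at the end as a ``companion situation'' --- spatial derivatives growing like $2^{j\delta M}$ --- is not a side case but the entire content of the lemma, and your one-sentence sketch of it is not a proof. The paper's argument is precisely that Cotlar--Stein computation, run in continuous form: it writes $S_{\tilde a}S^*_{\tilde a}=\int_{B^{\nu}_{j}}A_{\xi}\,d\xi$, inserts the weight $\mu_{j,N}(x-y)=(1+2^{j\rho}|\xi^{\nu}_{j}\cdot(x-y)|+2^{j\rho_0}|x-y|)^{-2N}$ by integration by parts in $\xi$, obtains the almost-orthogonality factor $(1+2^{-j\delta}|\xi-\eta|)^{-2N}$ by integration by parts in the intermediate spatial variable (this is where the $2^{j\delta M}$ growth of the $x$-derivatives enters and fixes the orthogonality scale), and then applies Schur's lemma over the plate $B^{\nu}_{j}$; the condition $1-\rho_0\leq\delta$ guarantees that only the radial direction contributes decay, yielding $\int_{B^{\nu}_{j}}(1+2^{-j\delta}|\xi-\eta|)^{-N}d\eta\lesssim 2^{j[\delta+(n-1)(1-\rho_0)]}$, which together with $\|\mu_{j,N}\|_1\sim 2^{-j(\rho+(n-1)\rho_0)}$ gives exactly the stated exponent after taking a square root. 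To repair your proposal you must actually carry out this (or your discretized) almost-orthogonality argument; the dilation step can at most serve as a preliminary normalization of the $\xi$-variable, not as a substitute for it.
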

\begin{proof}  Set
	$$S_{\tilde{a}}f(x)=\int_{\mathbb{R}^{n}}e^{i x\cdot\xi}\tilde{a}(x,\xi)f(\xi)d\xi.$$
	It is easy to see that $S_{\tilde{a}}S^*_{\tilde{a}}$ can be given by
	$$S_{\tilde{a}}S^*_{\tilde{a}}f(x)=\int_{\mathbb{R}^n}\int_{\mathbb{R}^n}e^{i(x-y)\cdot\xi}\tilde{a}(x,\xi)\overline{\tilde{a}(y,\xi)}d\xi f(y)dy.$$
	Take an integer $N>n$ and set
	$$\mu_{j,N}(x)=(1+2^{j\rho}|\xi^{\nu}_{j}\cdot x|+2^{j\rho_0}|x|)^{-2N}.$$
	Due to the assumptions on $\tilde{a}$ and integration by parts, we can obtain that
	\begin{align*}
		&\int_{\mathbb{R}^n}e^{i(x-y)\cdot\xi}\tilde{a}(x,\xi)\overline{\tilde{a}(y,\xi)}d\xi\\
		=&(1+2^{j\rho}|\xi^{\nu}_{j}\cdot(x-y)|+2^{j\rho_0}|x-y|)^{-2N}\\
		&\int_{\mathbb{R}^n}e^{i(x-y)\cdot\xi}\sum_{N_1+|\beta|\leq 2N}
		C_{N_1,\beta}2^{j(\rho N_1+\rho_0 |\beta|)}\partial^{N_1}_{\xi^{\nu}_{j}}\partial^{\beta}_{\xi}(\tilde{a}(x,\xi)\overline{\tilde{a}(y,\xi)})d\xi.
	\end{align*}
	Thus, we can write
	$$S_{\tilde{a}}S^*_{\tilde{a}}f(x)=\int_{\mathbb{R}^n}\int_{\mathbb{R}^n}e^{i(x-y)\cdot\xi}\tilde{b}(x,y,\xi)d\xi f(y)dy$$
	where
	$$\tilde{b}(x,y,\xi)=\mu_{j,N}(x-y)\sum_{N_1+|\beta|\leq 2N}
	C_{N_1,\beta}2^{j(\rho N_1+\rho_0 |\beta|)}\partial^{N_1}_{\xi^{\nu}_{j}}\partial^{\beta}_{\xi}(\tilde{a}(x,\xi)\overline{\tilde{a}(y,\xi)})$$
	is supported in $\mathbb{R}^n\times\mathbb{R}^n\times B^{\nu}_{j}$. As $\rho_0\leq \rho\leq \delta$, we can see that
	\begin{align}\label{gs11}
		\sup_{x,y,\xi\in\mathbb{R}^{n}}|\nabla^{K}_{x}\nabla^{M_2}_{y}\tilde{b}(x,y,\xi)|\leq C_{N,K,M_2}2^{j(2m+\delta (K+M_2))}\mu_{j,N}(x-y)
	\end{align}
	for any $N,K,M_2\in \mathbb{N}$ and some positive constants $C_{N,K,M_2}$.
	
	Set
	$$A_{\xi}f(x)=\int_{\mathbb{R}^n}e^{i(x-y)\cdot\xi}\tilde{b}(x,y,\xi)f(y)dy.$$
	As $b$ is supported in $\mathbb{R}^n\times\mathbb{R}^n\times B^{\nu}_{j}$, we have
	$$S_{\tilde{a}}S^*_{\tilde{a}}f(x)=\int_{B^{\nu}_{j}}A_{\xi}f(x)d\xi.$$
	Now, for any $\xi,\eta\in B^{\nu}_{j}$, we estimate the norm $\Vert A_{\xi}A_{\eta}^{*}\Vert_{L^2-L^2}+\Vert A^{*}_{\xi}A_{\eta}\Vert_{L^2-L^2}$.
	From the definition of $A_{\xi}$, some simple computations yield that
	\begin{align*}
		A_{\xi}A_{\eta}^{*}f(x)=&\int_{\mathbb{R}^n}e^{i(x-z)\cdot\xi}\tilde{b}(x,z,\xi)A_{\eta}^{*}f(z)dz\\
		=&\int_{\mathbb{R}^n}\int_{\mathbb{R}^n}e^{i(x-z)\cdot\xi}e^{-i(y-z)\cdot\eta}\tilde{b}(x,z,\xi)\overline{\tilde{b}(y,z,\eta)}f(y)dzdy\\
		=&\int_{\mathbb{R}^n}e^{i(x\cdot\xi-y\cdot\eta)}\int_{\mathbb{R}^n}e^{iz\cdot(\eta-\xi)}\tilde{b}(x,z,\xi)\overline{\tilde{b}(y,z,\eta)}dzf(y)dy.
	\end{align*}
	By (\ref{gs11}), we can get that
	\begin{align}\label{gs12}
		\sup_{x,\xi\in\mathbb{R}^{n}}|\nabla^{N}_{z}[\tilde{b}(x,z,\xi)\overline{\tilde{b}(y,z,\eta)}]|
		\leq C_{N}2^{j(4m+\delta N)}\mu_{j,N}(x-z)\mu_{j,N}(y-z).
	\end{align}
	By integration by parts and (\ref{gs12}), we obtain that
	\begin{align}
		&|\int_{\mathbb{R}^n}e^{iz\cdot(\eta-\xi)}\tilde{b}(x,z,\xi)\overline{\tilde{b}(y,z,\eta)}dz|\nonumber\\
		\leq &C_{N}(1+2^{-j\delta}|\eta-\xi|)^{-2N}\sum_{|\alpha|\leq 2N}|\int_{\mathbb{R}^n}e^{iz\cdot(\eta-\xi)}2^{-j\delta|\alpha|}\partial^{\alpha}_{z}[\tilde{b}(x,z,\xi)\overline{\tilde{b}(y,z,\eta)}]dz|\nonumber\\
		\leq &C_{N}2^{4jm}(1+2^{-j\delta}|\eta-\xi|)^{-2N}\int_{\mathbb{R}^n}\mu_{j,N}(x-z)\mu_{j,N}(y-z)dz.\label{gs13}
	\end{align}
	Thus, we have
	$$|A_{\xi}A_{\eta}^{*}f(x)|\leq C_{N}2^{4jm}(1+2^{-j\delta}|\eta-\xi|)^{-2N}\int_{\mathbb{R}^n}\int_{\mathbb{R}^n}\mu_{j,N}(x-z)\mu_{j,N}(y-z)|f(y)|dzdy$$
	Then, Young's inequality implies that
	\begin{align}
		\|A_{\xi}A_{\eta}^{*}f\|_2\leq &C_{N}2^{4jm}(1+2^{-j\delta}|\eta-\xi|)^{-2N}\|\mu_{j,N}\|_1\|\mu_{j,N}\|_1\|f\|_2\nonumber\\
		\leq &C_{N}2^{2j(2m-\rho-(n-1)\rho_0)}(1+2^{-j\delta}|\eta-\xi|)^{-2N}\|f\|_2.\label{gs14}
	\end{align}
	The same computations yield that
	\begin{align*}
		\|A^{*}_{\xi}A_{\eta}f\|_2\leq &C_{N}2^{2j(2m-\rho-(n-1)\rho_0)}(1+2^{-j\delta}|\eta-\xi|)^{-2N}\|f\|_2.
	\end{align*}
	Set
	$$h(\xi,\eta)=2^{j(2m-\rho-(n-1)\rho_0)}(1+2^{-j\delta}|\eta-\xi|)^{-N}\chi_{B^{\nu}_{j}}(\xi)\chi_{B^{\nu}_{j}}(\eta).$$
	Then we have show that
	$$\|A_{\xi}A_{\eta}^{*}\|_{L^2\rightarrow L^2}+\|A^{*}_{\xi}A_{\eta}\|_{L^2\rightarrow L^2}\leq C_{N}h^2(\xi,\eta).$$
	As $\xi,\eta\in B^{\nu}_{j}$ and $1-\rho_0\leq \max\{1-\rho,\rho\}\leq \delta$, it is easy to check that
	$$2^{-j\delta}|(\eta-\xi)-(\xi^{\nu}_{j}\cdot(\eta-\xi))\xi^{\nu}_{j}|\leq C2^{-j\delta}2^{j(1-\rho_0)}\leq C.$$
	Therefore, for any $\xi\in B^{\nu}_{j}$, we can obtain that
	\begin{align*}
		\int_{B^{\nu}_{j}}h(\xi,\eta)d\eta\leq &2^{j(2m-\rho-(n-1)\rho_0)}\int_{B^{\nu}_{j}}(1+2^{-j\delta}|\eta-\xi|)^{-N}d\eta\\
		\leq &2^{j(2m-\rho-(n-1)\rho_0)}\int_{B^{\nu}_{j}}(1+2^{-j\delta}|\xi^{\nu}_{j}\cdot(\eta-\xi)|)^{-N}d\eta\\
		\leq &C_N2^{j[2m-\rho-(n-1)\rho_0+\delta+(n-1)(1-\rho_0)]}\\
		= &C_N2^{j[2m+(n-1)(1-2\rho_0)+\delta-\rho]}
	\end{align*}
	and
	$$\int_{B^{\nu}_{j}}h(\xi,\eta)d\xi\leq C2^{j[2m+(n-1)(1-2\rho_0)+\delta-\rho]}.$$
	
	Then, by Schur's lemma, the integral operator $B$ given by $B(f)(\xi)=\int_{B^{\nu}_{j}}h(\xi,\eta)f(\eta)d\eta$ is bounded on $L^2$ and
	\begin{align*}
		\|B\|_{L^2\rightarrow L^2}\leq \biggl(\sup_{\zeta\in B^{\nu}_{j}}\int_{B^{\nu}_{j}}h(\zeta,\eta)d\eta \sup_{\zeta\in B^{\nu}_{j}}\int_{B^{\nu}_{j}}h(\xi,\zeta)d\xi\biggl)^{1/2}\leq C2^{j[2m+(n-1)(1-2\rho_0)+\delta-\rho]}.
	\end{align*}
	
	Hence, by Lemma \ref{al2}, we get that
	$$\|S_{\tilde{a}}S^*_{\tilde{a}}f\|_2\leq C2^{j[2m+(n-1)(1-2\rho_0)+\delta-\rho]}.$$
	By a standard dual argument, we have
	$$\|S_{\tilde{a}}\|_{L^2\rightarrow L^2}=\|S_{\tilde{a}}S^*_{\tilde{a}}\|^{\frac 12}_{L^2\rightarrow L^2}\lesssim 2^{j[m+(n-1)(\frac 12-\rho_0)+\frac{\delta-\rho}{2}]}.$$
	It follows from the Plancherel theorem that
	\begin{align*}
		\|T_{\tilde{a}}f\|_2=\|S_{\tilde{a}}\widehat{f}\|_2\lesssim 2^{j[m+(n-1)(\frac 12-\rho_0)+\frac{\delta-\rho}{2}]}\|\widehat{f}\|_2=2^{j[m+(n-1)(\frac 12-\rho_0)+\frac{\delta-\rho}{2}]}\|f\|_2.
	\end{align*}
	This finishes the proof.
\end{proof}

As a corollary, we can obtain the following lemma.
\begin{lemma}
	Under the conditions in Theorem 1.1, we can get that
	\begin{align*}
		\|T^{\nu}_{j}f\|_2\lesssim 2^{\frac{j(\rho-n)}{2}}\|f\|_2.
	\end{align*}
\end{lemma}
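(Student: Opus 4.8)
The plan is to reduce $T^{\nu}_{j}$, by a change of variables in the spatial variable, to a pseudo-differential operator of the type governed by Lemma 3.3, and then to unwind the resulting exponent. First I would peel off the part of the phase that is linear in $\xi$: recalling $h^{\nu}_{j}(x,\xi)=\phi(x,\xi)-\xi\cdot\nabla_{\xi}\phi(x,\xi^{\nu}_{j})$ and setting $\Theta^{\nu}_{j}(x):=\nabla_{\xi}\phi(x,\xi^{\nu}_{j})$, one has
$$T^{\nu}_{j}f(x)=\int_{\mathbb{R}^{n}}e^{i\xi\cdot\Theta^{\nu}_{j}(x)}\big(e^{ih^{\nu}_{j}}a_{j}\psi^{\nu}_{j}\big)(x,\xi)\,\widehat{f}(\xi)\,d\xi.$$

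Next I would record that $\Theta^{\nu}_{j}:\mathbb{R}^{n}\to\mathbb{R}^{n}$ is a global $C^{\infty}$-diffeomorphism: its differential $D\Theta^{\nu}_{j}(x)=\nabla_{x}\nabla_{\xi}\phi(x,\xi^{\nu}_{j})$ satisfies $\det D\Theta^{\nu}_{j}(x)\ge\lambda$ by the SND condition (\ref{gsh1.3}), while $|\nabla^{k}_{x}\Theta^{\nu}_{j}(x)|=|\nabla^{k}_{x}\nabla_{\xi}\phi(x,\xi^{\nu}_{j})|\le B_{1,k}$ for every $k\ge1$ by (\ref{gsh1.2}), uniformly in $j,\nu$ since $|\xi^{\nu}_{j}|=1$. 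Hence $\Theta^{\nu}_{j}$ is proper with uniformly expanding differential, so it maps $\mathbb{R}^{n}$ diffeomorphically onto $\mathbb{R}^{n}$, and all derivatives of $(\Theta^{\nu}_{j})^{-1}$ — in particular $|\det D(\Theta^{\nu}_{j})^{-1}|\in[C^{-1},\lambda^{-1}]$ — are bounded uniformly in $j,\nu$. Substituting $x=(\Theta^{\nu}_{j})^{-1}(u)$ above gives $T^{\nu}_{j}f\big((\Theta^{\nu}_{j})^{-1}(u)\big)=T_{\widetilde{b}}f(u)$, where $T_{\widetilde{b}}$ is the pseudo-differential operator (in the sense of Lemma 3.3) with symbol
$$\widetilde{b}(u,\xi):=\big(e^{ih^{\nu}_{j}}a_{j}\psi^{\nu}_{j}\big)\big((\Theta^{\nu}_{j})^{-1}(u),\xi\big);$$
since the Jacobian is comparable to $1$, $\|T^{\nu}_{j}f\|_{2}\approx\|T_{\widetilde{b}}f\|_{2}$.

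Then I would verify that $\widetilde{b}$ meets the hypotheses of Lemma 3.3 with $m=m_{1}$: it is supported in $\mathbb{R}^{n}\times B^{\nu}_{j}$ because $a_{j}\psi^{\nu}_{j}$ is; and since the $\xi$- and $\xi^{\nu}_{j}$-derivatives pass unchanged through the $u$-substitution while each $\nabla_{u}$ yields bounded derivatives of $(\Theta^{\nu}_{j})^{-1}$ times at most the same number of $x$-derivatives of $e^{ih^{\nu}_{j}}a_{j}\psi^{\nu}_{j}$, Lemma 2.2 (with $m=m_{1}$) and the chain rule give $|\partial^{N_{1}}_{\xi^{\nu}_{j}}\nabla^{N_{2}}_{\xi}\nabla^{M}_{u}\widetilde{b}|\lesssim 2^{j(m_{1}-N_{1}\rho-N_{2}\rho_{0}+M\delta)}$ (using $\delta\ge0$, so the top-order $x$-derivative term dominates). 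Lemma 3.3 then gives $\|T_{\widetilde{b}}f\|_{2}\lesssim 2^{j(m_{1}+(n-1)(\frac12-\rho_{0})+\frac{\delta-\rho}{2})}\|f\|_{2}$, and inserting $m_{1}=\rho-n+(n-1)\rho_{0}+\frac{1-\delta}{2}$ from (\ref{gsh1.4}) the $\pm(n-1)\rho_{0}$ terms cancel and $\frac{1-\delta}{2}+\frac{\delta-\rho}{2}=\frac{1-\rho}{2}$, so the exponent telescopes to $\rho-n+\frac{n-1}{2}+\frac{1-\rho}{2}=\frac{\rho-n}{2}$; combined with $\|T^{\nu}_{j}f\|_{2}\approx\|T_{\widetilde{b}}f\|_{2}$ this is exactly the claim.

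The step I expect to require the most care is establishing that $\Theta^{\nu}_{j}$ is a genuine global diffeomorphism, together with the \emph{uniformity in $j,\nu$} of the derivative and Jacobian bounds on $(\Theta^{\nu}_{j})^{-1}$ — this is precisely what makes the change of variables harmless for the $L^{2}$ norm, and it is where the SND condition (\ref{gsh1.3}) and the order-$1$ homogeneity built into the class $\Phi^{2}$ are used. The rest is a routine chain-rule estimate feeding Lemma 3.3 and elementary arithmetic with the exponents.
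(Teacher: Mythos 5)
Your proposal is correct and follows essentially the same route as the paper: straighten the phase via the globally invertible map $x\mapsto\nabla_{\xi}\phi(x,\xi^{\nu}_{j})$ (the paper's $F^{\nu}_{j}$), check via Lemma 2.2 and the uniform bounds on the inverse that the conjugated amplitude satisfies the hypotheses of the technical pseudo-differential estimate (which is Lemma 3.2 in the paper's numbering, not 3.3), and undo the change of variables using the SND-controlled Jacobian, i.e.\ inequality (\ref{gsh2.1}). The exponent arithmetic reducing $m_{1}+(n-1)(\tfrac12-\rho_{0})+\tfrac{\delta-\rho}{2}$ to $\tfrac{\rho-n}{2}$ is exactly as in the paper.
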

\begin{proof} Thanks to the SND condition (\ref{gsh1.3}), we can define the inverse mapping of $x\to \nabla_{\xi}\phi(x,\xi^{\nu}_{j})$ as $F^{\nu}_{j}$, i.e.
	$$F^{\nu}_{j}(\nabla_{\xi}\phi(x,\xi^{\nu}_{j}))=x, \hspace{0.2cm} \forall x\in \mathbb{R}^n.$$
	From (\ref{gsh1.2}) and (\ref{gsh1.3}) we can get that
	\begin{align}\label{inv}
		|\nabla^N F^{\nu}_{j}(x)|\leq C_N
	\end{align}
	for any $N\in \mathbb{N}$, where $C_N$ depends only on $N,\lambda$ in (\ref{gsh1.3}) and finitely many semi-norms in (\ref{gsh1.2}).
	
	Set
	$$\tilde{a}^{\nu}_{j}(x,\xi)=e^{ih^{\nu}_{j}(F^{\nu}_{j}(x),\xi)}a_j(F^{\nu}_{j}(x),\xi)\psi^{\nu}_{j}(\xi), \hspace{0.2cm}\forall x\in \mathbb{R}^n.$$
	From Lemma 2.2 and (\ref{inv}), one can see that
	\begin{align*}
		\sup_{x,\xi\in\mathbb{R}^{n}}|\partial^{N_1}_{\xi^{\nu}_{j}}\nabla^{N_2}_{\xi}\nabla^{M}_{x}\tilde{a}^{\nu}_{j}(x,\xi)|\leq C_{N_1,N_2,M}2^{j(m_1-\rho N_1-\rho_0N_2+\delta M)}.
	\end{align*}
	It is easy to see that $\tilde{a}^{\nu}_{j}$ is supported in $\mathbb{R}^n\times B^{\nu}_{j}$. Therefore, by Lemma 3.2, we get that
	\begin{align}\label{pse}
		\|T_{\tilde{a}^{\nu}_{j}}f\|_2\lesssim2^{j(m_1+(n-1)(\frac 12-\rho_0)+\frac {\delta-\rho}{2})}\|f\|_2=2^{\frac{j(\rho-n)}{2}}\|f\|_2.
	\end{align}
	On the other hand, it is to see that
	\begin{align*}
		T^{\nu}_{j}f(x)=&\int_{\mathbb{R}^{n}}e^{i\phi(x,\xi)}a_j(x,\xi)\psi^{\nu}_{j}(\xi)\widehat{f}(\xi)d\xi\\
		=&\int_{\mathbb{R}^{n}}e^{i\nabla_{\xi}\phi(x,\xi^{\nu}_{j})\cdot \xi}[e^{ih^{\nu}_{j}}a_j\psi^{\nu}_{j}](x,\xi)\widehat{f}(\xi)d\xi\\
		=&\int_{\mathbb{R}^{n}}e^{i\nabla_{\xi}\phi(x,\xi^{\nu}_{j})\cdot \xi}[e^{ih^{\nu}_{j}}a_j\psi^{\nu}_{j}](F^{\nu}_{j}(\nabla_{\xi}\phi(x,\xi^{\nu}_{j})),\xi)\widehat{f}(\xi)d\xi\\
		=&T_{\tilde{a}^{\nu}_{j}}f(\nabla_{\xi}\phi(x,\xi^{\nu}_{j})).
	\end{align*}
	By inequalities (\ref{gsh2.1}) and (\ref{pse}), we have
	\begin{align*}
		\|T^{\nu}_{j}f\|_2=&\biggl(\int_{\mathbb{R}^{n}}|T_{\tilde{a}^{\nu}_{j}}f(\nabla_{\xi}\phi(x,\xi^{\nu}_{j}))|^2dx\biggl)^{1/2}\\
		\lesssim &\|T_{\tilde{a}^{\nu}_{j}}f\|_2\lesssim 2^{\frac{j(\rho-n)}{2}}\|f\|_2.
	\end{align*}
	This finishes the proof.
\end{proof}

\section{Proof of Theorem 1.1}

 Due to the theory of atom decomposition, it suffices to show $\|T_{\phi,a} b\|_1\lesssim 1$ for any $L^2$-atom $b$ for $h^1(\mathbb{R}^n)$ which is defined in Section 2. By the translation-invariance,
we can assume that the center of atom $b$ is at the origin.We divide $\|T_{\phi,a} b\|_1$ into three parts,
$$\|T_{\phi,a} b\|_1\leq \|T_{\phi,a_0} b\|_1+\sum\limits_{2^{j\rho}r<1,j>0}\|T_{\phi,a_j}b\|_1+\sum\limits_{2^{j\rho}r\geq 1,j>0}\|T_{\phi,a_j}b\|_1:=I+II+III.$$
In fact, \cite[Theorem 1.18]{FS14} gives that $I \lesssim \|b\|_1\lesssim 1$. Next we estimate them separately (the second part is empty if $r\geq 1$).

\subsection{Part II: $\sum\limits_{2^{j\rho}r<1,j>0}\|T_{\phi,a_j}b\|_1$}

 In this case, there must be $r<1$. For any $N\in \mathbb{N}$, there holds
\begin{align}
	|\nabla^N\widehat{b}(\xi)|\lesssim \int_{|y|\leq r}|y|^N|b(y)|dy\lesssim r^N.\label{gsh3.4}
\end{align}
On the other hand, due to $\int_{|y|\leq r}b(y)dy=0$ when $r<1$, it is easy to see that
$$|\widehat{b}(\xi)|=|\int_{|y|\leq r}(e^{i2\pi y\cdot \xi}-1)b(y)dy|\lesssim r|\xi|.$$
From the definition of $B^{\nu}_{j}$ and the Plancherel theorem, we have
\begin{align*}
	\|\sum^{J}_{\nu=1}\chi_{B^{\nu}_{j}}|\widehat{b}|\|_2\lesssim (\int_{2^{j-1}<|\xi|<2^{j+1}}|\widehat{b}(\xi)|^2d\xi)^{\frac 12}\lesssim \min\{\|b\|_2,2^{j(1+\frac n2)}r\}
	\lesssim \min\{r^{-\frac n2},2^{j(1+\frac n2)}r\}
\end{align*}
which yields that
\begin{align}
	\sum^{\infty}_{j=0}2^{-\frac {jn}{2}}\|\sum^{J}_{\nu=1}\chi_{B^{\nu}_{j}}|\widehat{b}|\|_2\lesssim  \sum^{\infty}_{j=0}\min\{(2^jr)^{-\frac n2},2^jr\}\lesssim 1.\label{gsh3.3}
\end{align}

Set
$$L=1-2^{2j\rho}\partial^2_{\xi^{\nu}_{j}}-2^{2j\rho_0}\sum\limits^n_{k=1}\partial^2_{\xi_k}.$$
Obviously, $L$ is self-adjoint. Then as $\rho_0\leq \rho$, for any $N\in \mathbb{N}$ we can show that
\begin{align*}
	&|T^{\nu}_{j}b(x)|=|\int_{\mathbb{R}^{n}}e^{i\nabla_{\xi}\phi(x,\xi^{\nu}_j)\cdot \xi}e^{ih_j^{\nu}(x,\xi)} a_j(x,\xi)\psi^{\nu}_{j}(\xi)\widehat{b}(\xi)d\xi|\\
	=&(1+2^{2j\rho}|\partial_{\xi^{\nu}_{j}}\phi(x,\xi^{\nu}_j)|^2+2^{2j\rho_0}|\nabla_{\xi}\phi(x,\xi^{\nu}_j)|^2)^{-N}\\
	&|\int_{\mathbb{R}^{n}}L^N(e^{i\nabla_{\xi}\phi(x,\xi^{\nu}_j)\cdot \xi})e^{ih_j^{\nu}(x,\xi)} a_j(x,\xi)\psi^{\nu}_{j}(\xi)\widehat{b}(\xi)d\xi|\\
	=&(1+2^{2j\rho}|\partial_{\xi^{\nu}_{j}}\phi(x,\xi^{\nu}_j)|^2+2^{2j\rho_0}|\nabla_{\xi}\phi(x,\xi^{\nu}_j)|^2)^{-N}\\
	&|\int_{\mathbb{R}^{n}}e^{i\nabla_{\xi}\phi(x,\xi^{\nu}_j)\cdot \xi}L^N(e^{ih_j^{\nu}(x,\xi)} a_j(x,\xi)\psi^{\nu}_{j}(\xi)\widehat{b}(\xi))d\xi|\\
	\lesssim &(1+2^{j\rho}|\partial_{\xi^{\nu}_{j}}\phi(x,\xi^{\nu}_j)|+2^{j\rho_0}|\nabla_{\xi}\phi(x,\xi^{\nu}_j)|)^{-2N}\\
	&\sum_{k_1+|\alpha|+|\beta|\leq 2N}|\int_{\mathbb{R}^{n}}e^{i\nabla_{\xi}\phi(x,\xi^{\nu}_j)\cdot \xi}
	2^{jk_1\rho}2^{j|\alpha|\rho_0}\partial^{k_1}_{\xi^{\nu}_{j}}\partial^{\alpha}_{\xi}(e^{ih_j^{\nu}}a_j\psi^{\nu}_{j})
	2^{j|\beta|\rho}\partial^{\beta}_{\xi}\widehat{b}d\xi|\\
	= &(1+2^{j\rho}|\partial_{\xi^{\nu}_{j}}\phi(x,\xi^{\nu}_j)|+2^{j\rho_0}|\nabla_{\xi}\phi(x,\xi^{\nu}_j)|)^{-2N}
	\sum_{k_1+|\alpha|+|\beta|\leq 2N}|T_{j,k_1,\alpha}(2^{j|\beta|\rho}(\chi_{B^{\nu}_{j}}\partial^{\beta}_{\xi}\widehat{b})^{\vee})(x)|,
\end{align*}
where
\begin{equation*}
	T_{j,k_1,\alpha}f(x)=\int_{\mathbb{R}^{n}}e^{i\nabla_{\xi}\phi(x,\xi^{\nu}_j)\cdot \xi}
	2^{jk_1\rho}2^{j|\alpha|\rho_0}\partial^{k_1}_{\xi^{\nu}_{j}}\partial^{\alpha}_{\xi}(e^{ih_j^{\nu}}a_j\psi^{\nu}_{j})
	\widehat{f}d\xi.
\end{equation*}

From Lemma 2.2 and Lemma 3.2, we can use the same proof of Lemma 3.3 to get
\begin{align*}
	\|T_{j,k_1,\alpha}f\|_2\leq C2^{\frac{j(\rho-n)}{2}}\|f\|_2,
\end{align*}
where $C$ depends only on $n,\rho,\delta,\lambda,N$ finitely many semi-norms of $\phi\in \Phi^2,a\in S^{m_1}_{\rho,\delta}$.

Take an integer $N>n/2$. From (\ref{gsh2.1}), (\ref{gsh3.4}), (\ref{gsh3.3}) and the Plancherel theorem, we have
\begin{align}
	&\sum_{2^{j\rho}r<1,j>0}\sum^{J}_{\nu=1}\|T^{\nu}_{j}b\|_1\nonumber\\
	\lesssim &\sum_{2^{j\rho}r<1,j>0}\sum^{J}_{\nu=1}\sum_{k_1+|\alpha|+|\beta|\leq 2N}\nonumber\\
	&\int_{\mathbb{R}^n}(1+2^{j\rho}|\partial_{\xi^{\nu}_{j}}\phi(x,\xi^{\nu}_j)|+2^{j\rho_0}|\nabla_{\xi}\phi(x,\xi^{\nu}_j)|)^{-2N}
	|T_{j,k_1,\alpha}(2^{j|\beta|\rho}(\chi_{B^{\nu}_{j}}\partial^{\beta}_{\xi}\widehat{b})^{\vee})(x)|dx\nonumber\\
	\lesssim &\sum_{2^{j\rho}r<1,j>0}\sum^{J}_{\nu=1}\sum_{k_1+|\alpha|+|\beta|\leq 2N}\nonumber\\
	&\biggl(\int_{\mathbb{R}^n}(1+2^{j\rho}|\partial_{\xi^{\nu}_{j}}\phi(x,\xi^{\nu}_j)|+2^{j\rho_0}|\nabla_{\xi}\phi(x,\xi^{\nu}_j)|)^{-4N}dx\biggl)^{\frac 12}
	\|T_{j,k_1,\alpha}(2^{j|\beta|\rho}(\chi_{B^{\nu}_{j}}\partial^{\beta}_{\xi}\widehat{b})^{\vee})\|_2\nonumber\\
	\lesssim &\sum_{2^{j\rho}r<1,j>0}\sum^{J}_{\nu=1}2^{-\frac j2(\rho+(n-1)\rho_0)}2^{\frac{j(\rho-n)}{2}}
	\left(\|\chi_{B^{\nu}_{j}}\widehat{b}\|_2+\sum_{1\leq|\beta|\leq 2N}\|2^{j|\beta|\rho}\chi_{B^{\nu}_{j}}\partial^{\beta}_{\xi}\widehat{b}\|_2\right)\nonumber\\
	\lesssim &\sum_{2^{j\rho}r<1,j>0}2^{-\frac j2(\rho+(n-1)\rho_0)}2^{\frac{j(\rho-n)}{2}}
	\left(\sum^{J}_{\nu=1}\|\chi_{B^{\nu}_{j}}\widehat{b}\|_2+\sum^{J}_{\nu=1}\sum_{1\leq|\beta|\leq 2N}(2^{j\rho}r)^{|\beta|}|B^{\nu}_{j}|^{\frac 12}\right)\nonumber\\
	\lesssim &\sum_{2^{j\rho}r<1,j>0}2^{-\frac j2(n+(n-1)\rho_0)}\left(J^{\frac 12}\|\sum^{J}_{\nu=1}\chi_{B^{\nu}_{j}}|\widehat{b}|\|_2+J2^{j\rho}r2^{\frac j2(n-(n-1)\rho_0)}\right)\nonumber\\
	\lesssim &\sum_{2^{j\rho}r<1,j>0}2^{-\frac {jn}{2}}\|\sum^{J}_{\nu=1}\chi_{B^{\nu}_{j}}|\widehat{b}|\|_2+\sum_{2^{j\rho}r<1,j>0}2^{j\rho}r\lesssim 1.\label{gsh3.6}
\end{align}
In the last inequality, the assumption $\rho>0$ is necessary.

\subsection{Part III: $\sum\limits_{2^{j\rho}r\geq 1,j>0}\|T_{\phi,a_j}b\|_1$}

 Let $R^{\nu}_{j,r}$ be the ``exceptional set" defined in section 2.

When $0<\rho<1,\max\{\rho,1-\rho\}\leq\delta\leq 1$, from $\rho_0=\lambda_{\rho}\rho\leq \frac 12,\lambda_{\rho}\leq $, Lemma 3.3 and the Plancherel theorem, we can show that
\begin{align}
	\sum_{2^{j\rho}r\geq 1,j>0}\sum^{J}_{\nu=1}\|T^{\nu}_{j}b\|_{L^1(R^{\nu}_{j,r})}
	=&\sum_{2^{j\rho}r\geq 1,j>0}\sum^{J}_{\nu=1}\|T^{\nu}_{j}((\chi_{B^{\nu}_{j}}\widehat{b})^{\vee})\|_{L^1(R^{\nu}_{j,r})}\nonumber\\
	\leq &\sum_{2^{j\rho}r\geq 1,j>0}\sum^{J}_{\nu=1}|R^{\nu}_{j,r}|^{\frac 12}\|T^{\nu}_{j}((\chi_{B^{\nu}_{j}}\widehat{b})^{\vee})\|_2\nonumber\\
	\lesssim &\sum_{2^{j\rho}r\geq 1,j>0}\sum^{J}_{\nu=1}r^{\frac 12+\frac {(n-1)\lambda_{\rho}}{2}}2^{\frac{j(\rho-n)}{2}}\|(\chi_{B^{\nu}_{j}}\widehat{b})^{\vee}\|_2 \nonumber\\
	\lesssim &r^{\frac 12+\frac {(n-1)\lambda_{\rho}}{2}}\sum_{2^{j\rho}r\geq 1,j>0}\sum^{J}_{\nu=1}2^{\frac{j(\rho-n)}{2}}\|\chi_{B^{\nu}_{j}}\widehat{b}\|_2\nonumber\\
	\lesssim &r^{\frac 12+\frac {(n-1)\lambda_{\rho}}{2}}\sum_{2^{j\rho}r\geq 1,j>0}2^{\frac{j(\rho-n)}{2}}J^{\frac 12}\|\sum^{J}_{\nu=1}\chi_{B^{\nu}_{j}}|\widehat{b}|\|_2\nonumber\\
	\lesssim &r^{\frac 12+\frac {(n-1)\lambda_{\rho}}{2}}\sum_{2^{j\rho}r\geq 1,j>0}2^{\frac{j(\rho-n)}{2}}2^{\frac {j(n-1)\rho_0}{2}}\|b\|_2\nonumber\\
	\lesssim &r^{\frac {(n-1)(\lambda_{\rho}-1)}{2}}\sum_{2^{j\rho}r\geq 1,j>0}2^{\frac j2(\rho+(n-1)\rho_0-n)}\nonumber\\
	\lesssim &r^{\frac {(n-1)(\lambda_{\rho}-1)}{2}}\min\{1,r^{\frac{n-\rho-(n-1)\rho_0}{2\rho}}\}\nonumber\\
	\lesssim &\min\{ r^{\frac {(n-1)(\lambda_{\rho}-1)}{2}},r^{\frac{n(1-\rho)}{2\rho}} \}\lesssim 1.\label{gsh3.8}
\end{align}
Here $\rho+(n-1)\rho_0-n<0$ is necessary, which can be derived from the assumption $\rho<1$.

On the other hand, let $L$ be the operator defined in section 4.1. For any $N\in \mathbb{N}$ and $x\notin R^{\nu}_{j,r}$, by using (\ref{gsh2.5}), we can deduce that
\begin{align*}
	|T^{\nu}_{j} b(x)|=&\biggl|\int_{|y|<r}\int_{\mathbb{R}^{n}}e^{i(\nabla_{\xi}\phi(x,\xi^{\nu}_j)-y)\cdot \xi}e^{ih_j^{\nu}(x,\xi)} a_j(x,\xi)\psi^{\nu}_{j}(\xi)d\xi b(y)dy\biggl|\\
	= &\biggl|\int_{|y|<r}(1+2^{2j\rho}|\xi^{\nu}_j\cdot (\nabla_{\xi}\phi(x,\xi^{\nu}_j)-y)|^2+2^{2j\rho_0}|\nabla_{\xi}\phi(x,\xi^{\nu}_j)-y|^2)^{-N}\\
	&\int_{\mathbb{R}^{n}}L^N(e^{i(\nabla_{\xi}\phi(x,\xi^{\nu}_j)-y)\cdot \xi})e^{ih_j^{\nu}(x,\xi)} a_j(x,\xi)\psi^{\nu}_{j}(\xi)d\xi b(y)dy\biggl|\\
	\lesssim &\int_{|y|<r}(1+2^{j\rho}|\xi^{\nu}_j\cdot (\nabla_{\xi}\phi(x,\xi^{\nu}_j)-y)|+2^{j\rho_0}|\nabla_{\xi}\phi(x,\xi^{\nu}_j)-y|)^{-2N}|b(y)|dy\\
	&\biggl|\int_{\mathbb{R}^{n}}e^{i(\nabla_{\xi}\phi(x,\xi^{\nu}_j)-y)\cdot \xi}L^N(e^{ih_j^{\nu}} a_j\psi^{\nu}_{j})d\xi\biggl|\\
	\lesssim &\int_{|y|<r}(1+2^{j\rho}|\xi^{\nu}_j\cdot \nabla_{\xi}\phi(x,\xi^{\nu}_j)|+2^{j\rho_0}|\nabla_{\xi}\phi(x,\xi^{\nu}_j)|)^{-2N \lambda_{\rho}}|b(y)|dy\\
	&\biggl|\int_{\mathbb{R}^{n}}e^{i\nabla_{\xi}\phi(x,\xi^{\nu}_j)\cdot \xi}L^N(e^{ih_j^{\nu}} a_j\psi^{\nu}_{j})\left(e^{-iy\cdot \xi}\chi_{B^{\nu}_{j}}\right)d\xi\biggl|.
\end{align*}
From Lemma 2.2, we can see that $\title{a}_N(x,\xi)=L^N(e^{ih_j^{\nu}} a_j\psi^{\nu}_{j})$ satisfies the condition in Lemma 3.2 for $m=m_1$.  By using the same proof of Lemma 3.3, we obtain that
\begin{align*}
	&\int_{\mathbb{R}^{n}}\biggl|\int_{\mathbb{R}^{n}}e^{i\nabla_{\xi}\phi(x,\xi^{\nu}_j)\cdot \xi}L^N(e^{ih_j^{\nu}} a_j\psi^{\nu}_{j})\left(e^{-iy\cdot \xi}\chi_{B^{\nu}_{j}}\right)d\xi\biggl|^2dx\\
	\leq &C2^{j(\rho-n)}\int_{\mathbb{R}^{n}}|e^{-iy\cdot \xi}\chi_{B^{\nu}_{j}}|^2d\xi\\
	\leq &C2^{j(\rho-(n-1)\rho_0)},
\end{align*}
where $C$ is independent of $y,j,\nu$.

Hence, when $2\lambda_{\rho}N>n$ and $2^{j\rho}r>1$, by (\ref{gsh2.1}) we have
\begin{align}
	&\sum_{2^{j\rho}r\geq 1,j>0}\sum^{J}_{\nu=1}\|T^{\nu}_{j}b\|_{L^1((R^{\nu}_{j,r})^c)}\nonumber\\
	\lesssim &\sum_{2^{j\rho}r\geq 1,j>0}\sum^{J}_{\nu=1}\int_{|y|<r}\biggl[\int_{(R^{\nu}_{j,r})^c}(1+2^{j\rho}|\xi^{\nu}_j\cdot \nabla_{\xi}\phi(x,\xi^{\nu}_j)|+2^{j\rho_0}|\nabla_{\xi}\phi(x,\xi^{\nu}_j)|)^{-2N \lambda_{\rho}}\nonumber\\
	&|\int_{\mathbb{R}^{n}}e^{i\nabla_{\xi}\phi(x,\xi^{\nu}_j)\cdot \xi}L^N(e^{ih_j^{\nu}} a_j\psi^{\nu}_{j})\left(e^{iy\cdot \xi}\chi_{B^{\nu}_{j}}\right)d\xi|dx\biggl]|b(y)|dy\nonumber\\
	\lesssim &\sum_{2^{j\rho}r\geq 1,j>0}\sum^{J}_{\nu=1}\int_{|y|<r}\biggl[\int_{(R^{\nu}_{j,r})^c}(1+2^{j\rho}|\xi^{\nu}_j\cdot \nabla_{\xi}\phi(x,\xi^{\nu}_j)|+2^{j\rho_0}|\nabla_{\xi}\phi(x,\xi^{\nu}_j)|)^{-2n}dx
	\biggl]^{\frac 12}\nonumber\\
	&\biggl[\int_{\mathbb{R}^{n}}|\int_{\mathbb{R}^{n}}e^{i\nabla_{\xi}\phi(x,\xi^{\nu}_j)\cdot \xi}L^N(e^{ih_j^{\nu}} a_j\psi^{\nu}_{j})\left(e^{iy\cdot \xi}\chi_{B^{\nu}_{j}}\right)d\xi|^2dx\biggl]^{\frac 12}|b(y)|dy\nonumber\\
	\lesssim &\sum_{2^{j\rho}r\geq 1,j>0}\sum^{J}_{\nu=1}2^{\frac{j(\rho-(n-1)\rho_0)}{2}}\nonumber\\
	&\int_{|y|<r}\biggl[(\int_{|\xi^{\nu}_{j}\cdot z|>3r}+\int_{|z|>3r^{\lambda_{\rho}}})(1+2^{j\rho}|\xi^{\nu}_{j}\cdot z|+2^{j\rho_0}|z|)^{-2n}dx\biggl]^{\frac 12}|b(y)|dy\nonumber\\
	\lesssim &\sum_{2^{j\rho}r\geq 1,j>0}[(2^{j\rho}r)^{\frac 12-n}+(2^{j\rho}r)^{-\frac {\lambda_{\rho}n}{2}}]\lesssim 1.\label{gsh3.9}
\end{align}
Here we use the assumption $\rho,\lambda_{\rho}>0$.

Finally, from (\ref{gsh3.6}), (\ref{gsh3.8}) and (\ref{gsh3.9}), we get that
\begin{align*}
	&\|T_{\phi,a}b\|_1\\
	\lesssim &\|T_{\phi,a_0}b\|_1+\sum_{2^{j\rho}r<1,j>0}\sum^{J}_{\nu=1}\|T^{\nu}_{j}b\|_1+\sum_{2^{j\rho}r\geq 1,j>0}\sum^{J}_{\nu=1}\|T^{\nu}_{j}b\|_{L^1(R^{\nu}_{j,r})}
	+\sum_{2^{j\rho}r\geq 1,j>0}\sum^{J}_{\nu=1}\|T^{\nu}_{j}b\|_{L^1((R^{\nu}_{j,r})^c)}\\
	\lesssim &1.
\end{align*}
This finishes the proof of Theorem 1.1.

\section{Proof of Theorem 1.2}

\hspace{6mm} We use the same arguments in \cite[p. 411]{S93} to obtain Theorem 1.2. Let $p^{\prime}$ be the conjugate exponent of $p$. i.e. $\frac 1p+\frac {1}{p^{\prime}}=1$.
Note that the assumption $\max\{\rho,1-\rho\}\leq \delta<1$ implies that $0<\rho\leq \delta<1$.

For $a\in S^{m_p}_{\rho,\delta}$, we set
\begin{equation*}
	T_zf(x)=e^{(z+1-\frac 2p)^2}\int_{\mathbb{R}^{n}}e^{i\phi(x,\xi)}a(x,\xi)\left(\left(1+|\xi|^2\right)^\frac 12\right)^{\frac {n(\rho-\delta)}{2}-m_p+(m_1-\frac {n(\rho-\delta)}{2})z}\widehat{f}(\xi)\textrm{d}\xi.
\end{equation*}
It is easy to see that $\frac {n(\rho-\delta)}{2}-m_p+(m_1-\frac {n(\rho-\delta)}{2})(\frac 2p-1)=0$ and $T^*_z$ is holomorphic in $z$ (see \cite[p. 175]{S93}). Thus we have
$$T_{\frac 2p-1} f(x)=T_{1-\frac {2}{p^{\prime}}} f(x)=\int_{\mathbb{R}^{n}}e^{i\phi(x,\xi)}a(x,\xi)\widehat{f}(\xi)d\xi=T_{\phi,a} f(x).$$
For any $t\in \mathbb{R}$, we get that
\begin{align*}
	&|T_{it}f(x)|=e^{(\frac 2p-1)^2}\left|\int_{\mathbb{R}^{n}}e^{i\phi(x,\xi)}e^{-t^2}\left(1+|\xi|^2\right)^{\frac {it(2m_1-n(\rho-\delta))}{4}}a(x,\xi)\left(\left(1+|\xi|^2\right)^\frac 12\right)^{\frac {n(\rho-\delta)}{2}-m_p}\widehat{f}(\xi)d\xi\right|,\\
	&|T_{1+it} f(x)|=e^{(2-\frac 2p)^2}
	\left|\int_{\mathbb{R}^{n}}e^{i\phi(x,\xi)}e^{-t^2}\left(1+|\xi|^2\right)^{\frac {it(2m_1-n(\rho-\delta))}{4}}a(x,\xi)\left(\left(1+|\xi|^2\right)^\frac 12\right)^{m_1-m_p}\widehat{f}(\xi)d\xi\right|.
\end{align*}
Set
$$b(x,\xi)=e^{-t^2}\left(1+|\xi|^2\right)^{\frac {it(2m_1-n(\rho-\delta))}{4}}a(x,\xi)\left(\left(1+|\xi|^2\right)^\frac 12\right)^{\frac {n(\rho-\delta)}{2}-m_p}.$$
Some simple computations yield that
$b\in S^{\frac {n(\rho-\delta)}{2}}_{\rho,\delta}$ and the semi-norms do not depend on $t$ (see \cite[p. 411]{S93}).
By \cite[Theorem 2.7]{FS14} we have
$$\|T^*_{it} f\|_2\lesssim \|f\|_2.$$
Similarly, by Theorem 1.1 and $H^1-BMO$ duality argument, we can obtain that
$$\|T^*_{1+it} f\|_{BMO}\lesssim  \|f\|_{\infty}.$$
Using the Fefferman-Stein interpolation \cite[p. 175]{S93}, for $1<p<2$ we get that
$$\|T^*_{\phi,a} f\|_{p^{\prime}}=\|T^*_{\frac 2p-1}f\|_{p^{\prime}}=\|T^*_{1-\frac {2}{p^{\prime}}}f\|_{p^{\prime}}\lesssim\|f\|_{p^{\prime}}.$$
Therefore, by duality argument, we prove that
$$\|T_{\phi,a} f\|_{p}\lesssim\|f\|_{p}$$
as desired.\\

\end{document}